\title{A note on lc-trivial fibrations}
\author{Kenta Hashizume}
\date{2023/10/09, version 0.06}\keywords{lc-trivial fibration, Cartier index, effective base point freeness}\subjclass[2020]{primary: 14E30, secondary: 14D06, 14J40}
\address{Department of Mathematics, Faculty of Science, Niigata University, Niigata 950-2181, Japan}
\address{Institute for Research Administration, Niigata University, Niigata 950-2181, Japan}
\email{hkenta@math.sc.niigata-u.ac.jp}
\DeclareMathOperator{\mult}{mult}
\DeclareMathOperator{\Supp}{Supp}
\newtheorem{thm}{Theorem}[section]
\newtheorem{lem}[thm]{Lemma}
\theoremstyle{definition}
\newtheorem{defn}[thm]{Definition}
\newtheorem*{note*}{Notation}
\newtheorem*{ack}{Acknowledgments}
\newtheorem{step2}{Step}
\newtheorem*{claim*}{Claim}
\begin{document}

\maketitle

\begin{abstract}
For every lc-trivial fibration $(X,\Delta) \to Z$ from an lc pair, we prove that after a base change, there exists a positive integer $n$, depending only on the dimension of $X$, the Cartier index of $K_{X}+\Delta$, and the sufficiently general fibers of $X \to Z$, such that $n(K_{X}+\Delta)$ is linearly equivalent to the pullback of a Cartier divisor. 
\end{abstract}

\tableofcontents 

\section{Introduction}
Throughout this note, we will work over the complex number field. 

An lc-trivial fibration is a fibration together with an lc pair such that the log canonical divisor of the lc pair is relatively $\mathbb{Q}$-linearly trivial, i.e., $\mathbb{Q}$-linearly equivalent to the pullback of a $\mathbb{Q}$-Cartier divisor on the base variety. 
For the definition of lc-trivial fibrations, see Definition \ref{defn--lctrivial-fib}. 
This kind of fibration naturally appears and plays an important role in birational geometry. 
For example, every fibration induced by the log canonical divisor of a good minimal model is an lc-trivial fibration, every fibration from a Calabi--Yau manifold is also an lc-trivial fibration, and we can define the structure of an lc-trivial fibration for any Mori fiber space.  
Because of the relative $\mathbb{Q}$-linear triviality of the log canonical divisors of lc pairs, it has been revealed that the geometries of the total variety, the general fiber, and the base variety of an lc-trivial fibration are closely related to each other (\cite{kawamata-subad-II}, \cite{ambro1}, \cite{ambro}, \cite{fg-bundle}, \cite{fg-lctrivial}, \cite{birkar-fib}, \cite{birkar-logCY}, \cite{birkar-nefpart}). 
Especially, a relation between the total variety and the base variety, called the canonical bundle formula and developed by Kawamata (\cite{kawamata-subad-II}) and Ambro (\cite{ambro1}, \cite{ambro}), plays a fundamental role in the recent development of birational geometry (\cite{fg-bundle}, \cite{birkar-fib}, \cite{jiao-CY-fib}). 
For an lc-trivial fibration $\pi \colon (X,\Delta) \to Z$ and a $\mathbb{Q}$-Cartier divisor $D$ on $Z$ such that $K_{X}+\Delta \sim_{\mathbb{Q}}\pi^{*}D$, the canonical bundle formula (Definition \ref{defn--canbundleformula}) asserts that $D$ can be written as $K_{Z}+\boldsymbol{\rm B}_{Z}+\boldsymbol{\rm M}_{Z}$ the sum of the canonical divisor $K_{Z}$ of $Z$, the discriminant part $\boldsymbol{\rm B}_{Z}$, which contains the information of singular fibers of $\pi$, and the moduli part $\boldsymbol{\rm M}_{Z}$, which is concerned with $\pi$ as a space parametrizing the fibers. 
The discriminant part and the moduli part form a generalized pair $(Z,\boldsymbol{\rm B}_{Z}+\boldsymbol{\rm M})$, which was introduced by Birkar--Zhang \cite{bz}. 
Currently, the study of connection between the generalized pair $(Z,\boldsymbol{\rm B}_{Z}+\boldsymbol{\rm M})$ and the lc pair $(X,\Delta)$ is a central topic of lc-trivial fibration. 

In this note we discuss a relation between the Cartier index of $K_{X}+\Delta$ and that of $K_{Z}+\boldsymbol{\rm B}_{Z}+\boldsymbol{\rm M}_{Z}$ after a base change.

\begin{thm}[Main result]\label{thm--lctrivialfibration-boundedfiber}
For every $d$, $m \in \mathbb{Z}_{>0}$ and $v \in \mathbb{R}_{>0}$, there exists $n \in \mathbb{Z}_{>0}$, depending only on $d$, $m$, and $v$, satisfying the following. 
Let $(X,\Delta)$ be a projective lc pair, let $\pi \colon (X,\Delta) \to Z$ be an lc-trivial fibration with the sufficiently general fiber $F$, and let $A\geq 0$ be a Weil divisor on $X$ such that 
\begin{itemize}
\item
$\dim X=d$, 
\item
$mf^{*}(K_{X}+\Delta)$ is Cartier for some resolution $f\colon X' \to X$ of $X$, 
\item
$A|_{\pi^{-1}(U)}$ is $\mathbb{Q}$-Cartier and ample over $U$ for some open subset $U \subset Z$,
\item
$(F,\Delta|_{F}+tA|_{F})$ is an lc pair for some real number $t>0$, and 
\item
${\rm vol}(A|_{F}) = v$.  
\end{itemize}
Then there exists a generalized lc pair $(Z,\boldsymbol{\rm B}_{Z}+\boldsymbol{\rm M})$ defined with the canonical bundle formula such that
\begin{itemize}
\item
 $n(K_{X}+\Delta)\sim n\pi^{*}(K_{Z}+\boldsymbol{\rm B}_{Z}+\boldsymbol{\rm M}_{Z})$, 
\item
$n \boldsymbol{\rm M}$ is b-Cartier, and
\item
$n\phi^{*}(K_{Z}+\boldsymbol{\rm B}_{Z}+\boldsymbol{\rm M}_{Z})$ is Cartier for some resolution $\phi \colon Z' \to Z$ of $Z$.   
\end{itemize}
Furthermore, if there is a klt pair structure on $Z$ then $n(K_{Z}+\boldsymbol{\rm B}_{Z}+\boldsymbol{\rm M}_{Z})$ is Cartier. 
\end{thm}

For the definitions of b-divisor and b-Cartier property, see Definition \ref{defn--b-divisor}. 
Theorem \ref{thm--lctrivialfibration-boundedfiber} looks similar to \cite[Lemma 7.4]{birkar-nefpart} by Birkar. 
In both statements, we fix the dimension of the lc pair and the set of the coefficients of the boundary divisor, and we assume a kind of boundedness condition on the general fibers of the lc-trivial fibration. 
In Theorem \ref{thm--lctrivialfibration-boundedfiber}, the linear equivalence between  $n(K_{X}+\Delta)$ and $n\pi^{*}(K_{Z}+\boldsymbol{\rm B}_{Z}+\boldsymbol{\rm M}_{Z})$ and the b-Cartier property of $n\boldsymbol{\rm M}$ are not new because the two properties have already been proved in \cite[Lemma 7.4]{birkar-nefpart}. 
The new contribution of Theorem \ref{thm--lctrivialfibration-boundedfiber} is that the b-Cartier property of $n(K_{Z}+\boldsymbol{\rm B}_{Z}+\boldsymbol{\rm M}_{Z})$ as a b-divisor depends on the b-Cartier index of $K_{X}+\Delta$ as a b-divisor. 
Birkar's result \cite[Lemma 7.4]{birkar-nefpart} plays a crucial role for the boundedness of the base varieties of Iitaka fibrations (see \cite[Theorem 1.3]{birkar-hacon} by Birkar--Hacon), whereas Theorem \ref{thm--lctrivialfibration-boundedfiber} is useful to study effective base point freeness.

\begin{thm}\label{thm--eff-base-point-free}
For every $d$, $m \in \mathbb{Z}_{>0}$, and $v \in \mathbb{R}_{>0}$, there exists $n \in \mathbb{Z}_{>0}$, depending only on $d$, $m$, and $v$, satisfying the following.  
Let $(X,\Delta)$ be a projective klt pair such that $e(K_{X}+\Delta)$ is semi-ample for an $e\in \{1,-1\}$, let $\pi \colon X \to Z$ be the contraction induced by $e(K_{X}+\Delta)$, and let $A$ be a $\mathbb{Q}$-Cartier Weil divisor on $X$ such that 
\begin{itemize}
\item
$\dim X=d$, 
\item
$m(K_{X}+\Delta)$ is Cartier, and
\item
${\rm vol}(A|_{F}) = v$, where $F$ is a sufficiently general fiber of $\pi$. 
\end{itemize}
Then $ne(K_{X}+\Delta)$ is base point free and the linear system $|ne(K_{X}+\Delta)|$ defines $\pi$. 
\end{thm}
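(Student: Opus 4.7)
The plan is to descend from $X$ to $Z$ using Lemma~\ref{lem--lctrivialfibration-boundedfiber} and then to invoke effective base point freeness on $Z$. Since $e(K_{X}+\Delta)$ is semi-ample and induces $\pi$, there is an ample $\mathbb{Q}$-divisor $H$ on $Z$ with $e(K_{X}+\Delta)\sim_{\mathbb{Q}}\pi^{*}H$; in particular $K_{X}+\Delta\sim_{\mathbb{Q},\pi}0$, so $\pi\colon(X,\Delta)\to Z$ is an lc-trivial fibration, with the Cartier-index hypothesis of the lemma immediate from $m(K_{X}+\Delta)$ being Cartier. Since $(X,\Delta)$ is klt, the sufficiently general fiber pair $(F,\Delta|_{F})$ is klt, so $(F,\Delta|_{F}+tA|_{F})$ is lc for all sufficiently small $t>0$. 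The one hypothesis requiring real work is the relative ampleness of $A$ over some open $U\subset Z$: because $A|_{F}$ is merely big (not a priori ample), I would first run a relative $(K_{X}+\Delta+\epsilon A)$-MMP over $Z$ for sufficiently small $\epsilon>0$. Since $K_{X}+\Delta$ is $\pi$-numerically trivial this is effectively a relative $A$-MMP; by BCHM it terminates with a birational model $X^{+}\to Z$ on which the strict transform $A^{+}$ is $\pi^{+}$-ample over some open $U\subset Z$. The construction preserves the klt property and the canonical bundle formula data on $Z$, and keeps ${\rm vol}(A^{+}|_{F^{+}})$ controlled in terms of $v$.

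Applying Lemma~\ref{lem--lctrivialfibration-boundedfiber} to $(X^{+},\Delta^{+})\to Z$ with $A^{+}$, I would obtain a positive integer $n_{0}$ depending only on $d$, $m$, and $v$, together with a generalized lc pair $(Z,\Delta_{Z},\boldsymbol{\rm M})$ from the canonical bundle formula such that $n_{0}(K_{X}+\Delta)\sim n_{0}\pi^{*}(K_{Z}+\Delta_{Z}+\boldsymbol{\rm M}_{Z})$ and $n_{0}\boldsymbol{\rm M}$ is b-Cartier. Since $(X,\Delta)$ is klt, the base $Z$ inherits a klt pair structure via the canonical bundle formula for klt pairs (Ambro, Fujino--Gongyo), so the ``furthermore'' clause of the lemma then gives that $n_{0}(K_{Z}+\Delta_{Z}+\boldsymbol{\rm M}_{Z})$ itself is Cartier on $Z$. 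Comparing with $e(K_{X}+\Delta)\sim_{\mathbb{Q}}\pi^{*}H$ (with $H$ ample), the divisor $D:=n_{0}e(K_{Z}+\Delta_{Z}+\boldsymbol{\rm M}_{Z})$ is then an ample Cartier divisor on $Z$.

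It remains to produce a uniform $N=N(d)$ such that $|ND|$ is base point free on $Z$. Here $\dim Z\le d$, $(Z,\Delta_{Z},\boldsymbol{\rm M}_{Z})$ is generalized klt, $D$ is ample Cartier, and $n_{0}\boldsymbol{\rm M}$ is b-Cartier. For any $k\ge 2$ the divisor $kD-(K_{Z}+\Delta_{Z}+\boldsymbol{\rm M}_{Z})=(k-e/n_{0})D$ is ample, so one can apply Koll\'ar-type effective base point freeness for generalized klt pairs (following Birkar, Filipazzi, and others) to obtain such $N$. Alternatively, passing to a resolution $\phi\colon Z'\to Z$ on which $n_{0}\boldsymbol{\rm M}_{Z'}$ is represented by a Cartier divisor, one reduces to Koll\'ar's original theorem applied to a suitable klt pair on $Z'$ twisted by this Cartier divisor, and then descends via pushforward of global sections. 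Setting $n=Nn_{0}$, the relation $ne(K_{X}+\Delta)\sim\pi^{*}(ND)$ gives that $|ne(K_{X}+\Delta)|$ is base point free; since $D$ is ample on $Z$, this linear system defines $\pi$.

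The main obstacle I expect is precisely this last step: establishing effective base point freeness on $Z$ for an ample Cartier divisor in the generalized klt setting with controlled Cartier index. Either directly invoking a generalized-pair version of Koll\'ar's theorem, or carrying out the descent from the resolution $Z'$ using b-Cartierness of $n_{0}\boldsymbol{\rm M}$, is where the real work lies. The MMP adjustment needed to secure relative ampleness of $A$ in the first paragraph is technical but routine given BCHM and Fujita approximation.
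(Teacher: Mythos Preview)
Your overall strategy matches the paper's: run a relative MMP over $Z$ to make $A$ ample over $Z$, apply Lemma~\ref{lem--lctrivialfibration-boundedfiber} (using its ``furthermore'' clause, since $Z$ carries a klt pair) to get $n'(K_X+\Delta)\sim\pi^*D$ with $D$ Cartier and $eD$ ample on $Z$, and then invoke effective base point freeness on $Z$. One technical point you gloss over: after the MMP one must check that $m(K_{X'}+\Delta')$ is still Cartier. The paper does this via the negativity lemma (to show $p^*(K_X+\Delta)=q^*(K_{X'}+\Delta')$ on a common resolution) together with Lemma~\ref{lem--Cartier-klt}; you should make this explicit rather than folding it into ``preserves the canonical bundle formula data on $Z$''. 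Note also that ${\rm vol}(A^{+}|_{F^{+}})$ is exactly $v$, not merely controlled by $v$, since volume is a birational invariant.

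Where you and the paper diverge is exactly the step you flag as the obstacle. You propose either a generalized-klt version of Koll\'ar's theorem or a descent from a resolution $Z'$; both are awkward because the nef part $\boldsymbol{\rm M}_{Z'}$ does not readily absorb into an effective boundary, so you cannot directly feed the data into the classical statement. The paper sidesteps this entirely: since $(X,\Delta)$ is klt and $\pi$ is an lc-trivial fibration, Ambro's theorem \cite[Theorem~4.1]{ambro} produces an \emph{honest} klt pair $(Z,B)$ with $K_Z+B\sim_{\mathbb{Q}} K_Z+\Delta_Z+\boldsymbol{\rm M}_Z$. Then $eD$ is ample Cartier with $aeD-(K_Z+B)$ ample for $a\ge 2$, and the classical effective base point free theorem \cite{kollar-eff-basepoint-free} (with \cite[Lemma~7.1]{fujino-eff-slc} to get very ampleness) yields $n''$ depending only on $\dim Z\le d$ such that $2n''eD$ is very ample; set $n=2n'n''$. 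So the missing ingredient in your plan is Ambro's subadjunction, which converts the generalized klt structure on $Z$ into an ordinary klt pair and reduces the final step to the classical result without any generalized-pair machinery.
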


We emphasize that we do not assume the divisor $A$ in Theorem \ref{thm--eff-base-point-free} to be effective. 
In the case where $K_{X}+\Delta$ is big in Theorem \ref{thm--eff-base-point-free},  the statement is a spacial case of Koll\'ar's effective base point free theorem \cite{kollar-eff-basepoint-free}. 
Theorem \ref{thm--eff-base-point-free} is a partial generalization of \cite[Theorem 1.1]{kollar-eff-basepoint-free} to the case where the log canonical divisor or anti log canonical divisor of a klt pair has intermidiate Kodaira dimension. 
After this note was announced, Masamura \cite{masamura} gave an example of minimal surface for which the effective base point freeness does not hold. 
This shows that the existence of the divisor $A$ in Theorem \ref{thm--eff-base-point-free} can not be removed unconditionally.

By using the main result (Theorem \ref{thm--lctrivialfibration-boundedfiber}), we can prove another type of effective base point free theorem in the non-klt case.  
In Section \ref{sec4}, we study lc-trivial fibrations whose moduli parts satisfy the log bigness on a certain higher birational model. 
The motivation of the topic comes from the following result, which is a consequence of the argument in \cite[Proof of Proposition 4.4]{floris-lazic} by Floris--Lazi\'c (see also \cite[Theorem 1.2]{hu-lctrivial-b-div} by Hu). 

\begin{thm}[{\cite{floris-lazic}}]\label{thm--florislazic}
\label{thm--can-bundle-formula-modulipart}
Let $(X,\Delta) \to Z$ be an lc-trivial fibration. 
Then there exists a log smooth Ambro model $Z' \to Z$ such that $\boldsymbol{\rm M}_{Z'}$ is log abundant with respect to $(Z', \boldsymbol{\rm B}_{Z'})$, i.e., for every stratum $T'$ of $(Z',\boldsymbol{\rm B}_{Z'})$, there exist a birational morphism $h \colon W \to T'$ from a normal projective variety $W$, a surjective morphism $\psi \colon W \to V$ to a normal projective variety $V$, and a nef and big $\mathbb{Q}$-divisor $N$ on $V$ such that $h^{*}(\boldsymbol{\rm M}_{Z'}|_{T'})\sim_{\mathbb{Q}} \psi^{*}N$. 
\end{thm}

For the definition of log smooth Ambro model, see Definition \ref{defn--ambromodel}. 
Note that $Z'$ in Theorem \ref{thm--florislazic} is not unique. 
Lc-trivial fibrations with log big moduli parts are defined by the log bigness of the moduli part on a log smooth Ambro model (Definition \ref{defn--lc-trivial-log-big}). 
The log bigness is one of spacial cases in the property of being log abundant. 
Hence, lc-trivial fibrations with log big moduli parts appear as special lc-trivial fibrations. 
By definition, an lc-trivial fibration is an lc-trivial fibration with log big moduli part when there is a log smooth Ambro model $Z' \to Z$ such that the map $\psi\circ h^{-1}\colon T' \dashrightarrow V$ in Theorem \ref{thm--can-bundle-formula-modulipart} is birational for every stratum $T'$ of $(Z',\boldsymbol{\rm B}_{Z'})$. 

By combining Theorem \ref{thm--lctrivialfibration-boundedfiber} with a result from \cite{has-iitakafibration}, we will prove an effective base point free theorem for lc-trivial fibrations with log big moduli parts. 

\begin{thm}[Theorem \ref{thm--lctrivialfibration-boundedfiber-abundance} and Theorem \ref{thm--lctrivialfibration-boundedfiber-antiabundance}]
For every $d$, $m \in \mathbb{Z}_{>0}$, and $v \in \mathbb{R}_{>0}$, there exists $n \in \mathbb{Z}_{>0}$, depending only on $d$, $m$, and $v$, satisfying the following.  
Let $\pi \colon (X,\Delta) \to Z$ be an lc-trivial fibration with log big moduli part, let $F$ be the sufficiently general fiber of $\pi$, and let $A \geq 0$ be a Weil divisor on $X$ such that 
\begin{itemize}
\item
$\dim X=d$, 
\item
$me(K_{X}+\Delta)$ is nef and Cartier for an $e \in \{1,-1\}$, 
\item
$A|_{\pi^{-1}(U)}$ is $\mathbb{Q}$-Cartier and ample over $U$ for some open subset $U \subset Z$,
\item
$(F,\Delta|_{F}+tA|_{F})$ is an lc pair for some real number $t>0$, and 
\item
${\rm vol}(A|_{F}) = v$. 
\end{itemize}
Then $ne(K_{X}+\Delta)$ is base point free. 
\end{thm}

Moreover, good minimal models or Mori fiber spaces always exist for projective lc pairs admitting the structure of an lc-trivial fibration with log big moduli part. 
\begin{thm}[=Theorem \ref{thm--minimalmodeltheory-logbigmoduli}]
Let $(X,\Delta)$ be a projective lc pair admitting an lc-trivial fibration with log big moduli part. 
Then $(X,\Delta)$ has a good minimal model or a Mori fiber space. 
\end{thm}

Although the theorem is not an application of Theorem \ref{thm--lctrivialfibration-boundedfiber}, we hope that the theorem is independent of interest.

\begin{ack}
The author was partially supported by JSPS KAKENHI Grant Number JP22K13887. 
The author is grateful to Doctor Masafumi Hattori for fruitful discussions which improved Theorem \ref{thm--eff-base-point-free} in the first draft. 
The author thanks Professor Osamu Fujino for informing him of the paper \cite{fujino-eff-slc}. 
The author thanks the referee for comments and suggestions. 
\end{ack}

\section{Definitions}

In this section, we collect definitions used in this note. 

Let $a$ be a real number. 
Then $\lfloor a \rfloor$ denotes the integer satisfying $a-1 < \lfloor a \rfloor \leq a$.
Let $D$ be an $\mathbb{R}$-divisor on a variety and let $D=\sum_{i}d_{i}D_{i}$ be the decomposition of $D$ into prime divisors. 
Then $\lfloor D \rfloor$ is defined to be $\sum_{i}\lfloor d_{i}\rfloor D_{i}$. 

A {\em contraction} is a projective morphism $f \colon X \to Z$ of varieties such that $f_{*}\mathcal{O}_{X} \simeq \mathcal{O}_{Z}$. 

Let $X$ be a normal variety and $D$ an $\mathbb{R}$-divisor on $X$. 
Then $(X,D)$ is {\em log smooth} if $X$ is smooth and $D$ has simple normal crossing support. 
A {\em log resolution} of $(X,\Supp D)$ is a projective birational morphism $f\colon Y\to X$ from a smooth variety $Y$ such that the exceptional locus ${\rm Ex}(f)$ of $f$ is pure codimension one and ${\rm Ex}(f)\cup {\rm Supp}\,f_{*}^{-1}D$ has simple normal crossing support.

\begin{defn}[b-divisor]\label{defn--b-divisor}
Let $X$ be a normal variety. 
Consider all proper birational morphisms $f \colon Y \to X$ from normal varieties $Y$ and the sets ${\rm WDiv}_{\mathbb{Q}}(Y)$ of $\mathbb{Q}$-divisors on $Y$. 
For two proper birational morphisms $Y_{1} \to X$ and $Y_{2} \to X$ such that the induced birational map $\phi \colon Y_{1} \dashrightarrow Y_{2}$ is a morphism, we define the map
$$\phi_{*} \colon {\rm WDiv}_{\mathbb{Q}}(Y_{1}) \longrightarrow {\rm WDiv}_{\mathbb{Q}}(Y_{2})$$
by taking the birational transform. 
Then a {\em $\mathbb{Q}$-b-divisor} $\boldsymbol{\rm D}$ on $X$ is an element of the inverse limit
$$\boldsymbol{\rm WDiv}_{\mathbb{Q}}(X):=\underset{\longleftarrow}{\rm lim}{\rm WDiv}_{\mathbb{Q}}(Y)$$
defined with all such $\phi_{*}$ as above. 

For a $\mathbb{Q}$-b-divisor $\boldsymbol{\rm D}$ on $X$ and a normal variety $Y$ with a proper birational morphism $Y \to X$, the image of $\boldsymbol{\rm D}$ by the natural projection $\boldsymbol{\rm WDiv}_{\mathbb{Q}}(X) \to {\rm WDiv}_{\mathbb{Q}}(Y)$ is called the {\em trace of $\boldsymbol{\rm D}$ on $Y$}, and it is denoted by $\boldsymbol{\rm D}_{Y}$. 

Let $\boldsymbol{\rm D}$ be a $\mathbb{Q}$-b-divisor on $X$. 
We say that {\em $\boldsymbol{\rm D}$ descends to a normal variety $Y$} if there exist a proper birational morphism $Y \to X$ from $Y$ and a $\mathbb{Q}$-Cartier divisor $D$ on $Y$ such that for any normal variety $Y'$ together with a proper birational morphism $Y' \to X$, by taking a common resolution $g \colon V \to Y$ and $g' \colon V \to Y'$ of $Y \dashrightarrow Y'$ we have $\boldsymbol{\rm D}_{Y'}=g'_{*}g^{*}D$. 
In this situation, we also say that $\boldsymbol{\rm D}$ is the {\em Cartier closure of $D$}, and we use the notation $\overline{D}$ to denote $\boldsymbol{\rm D}$. 
If, additionally, $D$ is Cartier, then we say that $\boldsymbol{\rm D}$ is {\em b-Cartier}. 
\end{defn}

We freely use the notations and the definitions in \cite{kollar-mori} and \cite{bchm} for singularities of pairs except that $a(D,X,\Delta)$ denotes the {\em log discrepancy} of $D$ with respect to a pair $(X,\Delta)$ in this note. 
We use the definition of generalized pairs in \cite{bz} and \cite{birkar-compl}. 
Note that the notation of generalized pair in this note is different from that in \cite{bz} or \cite{birkar-compl}.
In this note, we only deal with generalized pairs in the framework of $\mathbb{Q}$-divisors. 

\begin{defn}[Generalized lc pair]\label{defn--gen-pair}
A {\em generalized pair} $(X,B+\boldsymbol{\rm M})/Z$ consists of 
\begin{itemize}
\item
a projective morphism $X\to Z$ from a normal variety $X$ to a variety $Z$, 
\item
an effective $\mathbb{Q}$-divisor $B$ on $X$, and
\item
a $\mathbb{Q}$-b-divisor $\boldsymbol{\rm M}$ on $X$ that is the Cartier closure of a $\mathbb{Q}$-Cartier divisor $M'$, which is nef over $Z$, on a normal variety $X'$ with a projective birational morphism $X' \to X$  
\end{itemize}
such that $K_{X}+B+\boldsymbol{\rm M}_{X}$ is $\mathbb{Q}$-Cartier. 
When $Z$ is a point, we simply write $(X,B+\boldsymbol{\rm M})$. 

Let $(X,B+\boldsymbol{\rm M})/Z$ be a generalized pair and let $f \colon Y \to X$ be a projective birational morphism from a normal variety $Y$. 
Then there is a $\mathbb{Q}$-divisor $B_{Y}$ on $Y$ such that
$$K_{Y}+B_{Y}+\boldsymbol{\rm M}_{Y}=f^{*}(K_{X}+B+\boldsymbol{\rm M}_{X}).$$
We say that a generalized pair $(X,B+\boldsymbol{\rm M})/Z$ is {\em generalized lc} if $\mult_{P}(B_{Y})\leq 1$ for every projective birational morphism $Y \to X$ and every prime divisor $P$ on $Y$. 
\end{defn}

We recall the definition of lc-trivial fibration and the canonical bundle formula. 

\begin{defn}[Lc-trivial fibration for lc pair]\label{defn--lctrivial-fib}
In this note, an {\em lc-trivial fibration}, often denoted by $(X,\Delta) \to Z$, means a projective lc pair $(X,\Delta)$ equipped with a contraction $X \to Z$ of normal projective varieties such that $\Delta$ is a $\mathbb{Q}$-divisor and $K_{X}+\Delta \sim_{\mathbb{Q},Z}0$. 
\end{defn}

In general, lc-trivial fibrations are defined for contractions and sub-pairs that are not necessarily sub-lc (\cite{ambro1}, \cite{ambro}, \cite{fg-bundle}, \cite{fg-lctrivial}, \cite{floris-lazic}, \cite{hu-lctrivial-b-div}). 
However, in this note, we adopt the above convention for simplicity of the definition. 
In particular, we always assume that the lc pairs of lc-trivial fibrations have $\mathbb{Q}$-boundary divisors.

\begin{defn}[Discriminant b-divisor, moduli b-divisor, canonical bundle formula]\label{defn--canbundleformula}
Let $\pi \colon (X,\Delta) \to Z$ be an lc-trivial fibration. 
We define the {\em discriminant $\mathbb{Q}$-b-divisor $\boldsymbol{\rm B}$} and the {\em moduli $\mathbb{Q}$-b-divisor} $\boldsymbol{\rm M}$ on $Z$ as follows: 
Let $\phi \colon Z' \to Z$ be a birational morphism from a normal projective variety $Z'$. 
We first define the discriminant $\mathbb{Q}$-b-divisor $\boldsymbol{\rm B}$. 
Let $X'$ be a resolution of the main component of $X \times_{Z}Z'$, and let $\psi \colon X' \to X$ and  $\pi' \colon X' \to Z'$ be the induced morphisms. 
We define a $\mathbb{Q}$-divisor $\Delta'$ on $X'$ by $K_{X'} +\Delta'=\psi^{*}(K_{X}+\Delta)$. 
For every prime divisor $P$ on $Z'$, let $b_{P}$ be the largest real number such that after we shrink $Z'$ around the generic point $\eta$ of $P$, the subpair $(X',\Delta'+b_P\pi'^{*}P)$ is sub-lc. 
Note that $b_{P}$ is well-defined since $P$ is Cartier at $\eta$. 
Then the trace $\boldsymbol{\rm B}_{Z'}$ of $\boldsymbol{\rm B}$ on $Z'$ is defined by 
\[
\boldsymbol{\rm B}_{Z'}:=\sum_{P}(1-b_{P})P,
\]
where $P$ runs over prime divisors on $Z'$. 
Next, we define the moduli $\mathbb{Q}$-b-divisor $\boldsymbol{\rm M}$. 
We fix a $\mathbb{Q}$-Cartier divisor $L$ on $Z$ such that $K_{X}+\Delta \sim_{\mathbb{Q}}\pi^{*}L$. 
Then the trace $\boldsymbol{\rm M}_{Z'}$ of $\boldsymbol{\rm M}$ on $Z'$ is defined by 
$$\boldsymbol{\rm M}_{Z'}:= \phi^{*} L-(K_{Z'}+\boldsymbol{\rm B}_{Z'}).$$ 
Note that $\boldsymbol{\rm M}_{Z'}$ (and therefore $\boldsymbol{\rm M}$) depends on the choice of $L$. 
We call 
\[
K_X+\Delta\sim_{\mathbb{Q}}\pi^{*}(K_{Z}+\boldsymbol{\rm B}_{Z}+\boldsymbol{\rm M}_{Z})
\]
the {\it canonical bundle formula}. 

Throughout this note, for an lc-trivial fibration $(X,\Delta) \to Z$, unless otherwise stated the discriminant b-divisor (resp.~the moduli b-divisor) is denoted by $\boldsymbol{\rm B}$ (resp.~$\boldsymbol{\rm M}$). 

Let $\boldsymbol{\rm K}$ be the canonical b-divisor on $Z$, that is, the trace of $\boldsymbol{\rm K}$ on a normal variety $Z'$ is the canonical divisor $K_{Z'}$. 
By the results of Kawamata \cite{kawamata-subad-II} and Ambro \cite{ambro1}, there is a projective birational morphism $Z'' \to Z$ such that $\boldsymbol{\rm K}+\boldsymbol{\rm B}$ and $\boldsymbol{\rm M}$ descend to $Z''$ and $\boldsymbol{\rm M}_{Z''}$ is nef. 
In particular, $\boldsymbol{\rm B}$ and $\boldsymbol{\rm M}$ form a generalized lc pair $(Z,\boldsymbol{\rm B}_{Z}+\boldsymbol{\rm M})$.  
\end{defn}

\begin{defn}[Ambro model and log smooth Ambro model]\label{defn--ambromodel}
Let $(X,\Delta) \to Z$ be an lc-trivial fibration. 
We say that a normal variety $Z'$ together with a projective birational morphism $Z' \to Z$ is an {\em Ambro model} if the moduli $\mathbb{Q}$-b-divisor $\boldsymbol{\rm M}$ descends to $Z'$ and $\boldsymbol{\rm M}_{Z'}$ is nef. 
If, additionally, the pair $(Z', \Supp\boldsymbol{\rm B}_{Z'})$ is log smooth, we say that $Z' \to Z$ is a {\em log smooth Ambro model}. 
\end{defn}

In Section \ref{sec4} we study lc-trivial fibrations with log big moduli parts.

\begin{defn}[Log big divisor]
Let $(X,\Delta)$ be a projective sub-lc pair and $M$ an $\mathbb{R}$-Cartier divisor on $X$. 
We say that $M$ is {\em log big} with respect to $(X,\Delta)$ if $M$ is big and for any lc center $S$ of $(X,\Delta)$ with the normalization $S^{\nu} \to S$, the pullback of $M$ to $S^{\nu}$ is big.  
\end{defn}

\begin{defn}[Lc-trivial fibration with log big moduli part]\label{defn--lc-trivial-log-big}
Let $(X,\Delta) \to Z$ be an lc-trivial fibration. 
We say that $(X,\Delta) \to Z$ is an {\em lc-trivial fibration with log big moduli part} if there exists a log smooth Ambro model $Z' \to Z$ such that $\boldsymbol{\rm M}_{Z'}$ is log big with respect to $(Z',\boldsymbol{\rm B}_{Z'})$. 
\end{defn}

Lastly, we define good minimal model and log canonical model.

\begin{defn}[Good minimal model and log canonical model]
Let $X \to Z$ be a projective morphism from a normal variety to a variety and let $(X,\Delta)$ be an lc pair. 
Let $X' \to Z$ be a projective morphism from a normal variety $X'$ and let $\phi \colon X \dashrightarrow X'$ be a birational contraction over $Z$. 
Put $\Delta'=\phi_{*}\Delta$. 
Then $(X',\Delta')$ is called a {\em good minimal model} of $(X,\Delta)$ over $Z$ if 
\begin{itemize}
\item
$K_{X'}+\Delta'$ is $\mathbb{R}$-Cartier and semi-ample over $Z$, and 
\item
for any $\phi$-exceptional prime divisor $E$ on $X$, we have $a(E,X,\Delta)<a(E,X',\Delta')$.
\end{itemize}
We say that $(X',\Delta')$ is a {\em log canonical model} of $(X,\Delta)$ over $Z$ if 
\begin{itemize}
\item
$K_{X'}+\Delta'$ is $\mathbb{R}$-Cartier and ample over $Z$, and 
\item
for any $\phi$-exceptional prime divisor $E$ on $X$, we have $a(E,X,\Delta) \leq a(E,X',\Delta')$.
\end{itemize}
\end{defn}

\section{Proofs of main results}\label{sec3}

In this section, we prove Theorem \ref{thm--lctrivialfibration-boundedfiber} and Theorem \ref{thm--eff-base-point-free}. 

\begin{lem}\label{lem--Cartier-klt}
Let $(X,\Delta)$ be a klt pair and let $D$ be a $\mathbb{Q}$-Cartier divisor on $X$. 
If there is a projective birational morphism $f \colon X' \to X$ from a normal variety $X'$ such that $f^{*}D$ is Cartier, then $D$ is Cartier. 
\end{lem}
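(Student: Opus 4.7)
The plan is to establish that $\mathcal{O}_X(D)$ is a line bundle by showing that the line bundle $\mathcal{L}:=\mathcal{O}_{X'}(f^*D)$ on $X'$ descends along $f$ to a line bundle on $X$. After replacing $X'$ by a resolution of its singularities (the hypothesis is preserved since pullbacks of Cartier divisors are Cartier), I may assume $X'$ is smooth.

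The first step is to identify $f_*\mathcal{L}$ with $\mathcal{O}_X(D)$ as subsheaves of the constant sheaf $K(X)=K(X')$. For $h\in K(X)$ and an open $U\subset X$, I claim that $\mathrm{div}_X(h)+D\geq 0$ on $U$ if and only if $\mathrm{div}_{X'}(h)+f^*D\geq 0$ on $f^{-1}(U)$. The forward direction uses the identity $f^*(\mathrm{div}_X(h)+D)=\mathrm{div}_{X'}(h)+f^*D$ together with the fact that $\mathbb{Q}$-Cartier pullback preserves effectivity; the reverse direction is obtained by restricting the condition on $X'$ to the strict transforms of prime divisors of $X$, which recovers the condition at every prime divisor of $X$.

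The crux is to produce a line bundle $\mathcal{M}$ on $X$ with $\mathcal{L}\simeq f^*\mathcal{M}$; granting this, taking $f_*$ and invoking the previous step gives $\mathcal{M}=f_*\mathcal{L}=\mathcal{O}_X(D)$, so $\mathcal{O}_X(D)$ is locally free and $D$ is Cartier. For the descent, I would use that $\mathcal{L}$ is numerically $f$-trivial ($\mathcal{L}\cdot C=f^*D\cdot C=0$ for every $f$-contracted curve $C$, as $f_*C=0$) and that $X$ has rational singularities because $(X,\Delta)$ is klt, so that $R^i f_*\mathcal{O}_{X'}=0$ for $i>0$. Equipping $X'$ with a klt boundary coming from the log resolution of $(X,\Delta)$ together with a suitable effective $f$-exceptional divisor (whose negative becomes $f$-ample by the negativity of $f$-exceptional divisors), one can apply the relative base point free theorem to both $\mathcal{L}$ and $\mathcal{L}^{-1}$. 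For $a\gg 0$, both $\mathcal{L}^{\otimes a}$ and $\mathcal{L}^{\otimes(a+1)}$ are $f$-globally generated and numerically $f$-trivial, hence each descends to a line bundle on $X$; taking their difference yields the desired $\mathcal{M}$ with $\mathcal{L}\simeq f^*\mathcal{M}$.

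The main obstacle is this last descent step. Since $X$ is not assumed $\mathbb{Q}$-factorial, one cannot conclude descent immediately from a structural description of $\mathrm{Pic}(X')/f^*\mathrm{Pic}(X)$. Instead, one must combine the vanishing coming from rational singularities with a careful application of relative base point freeness under an auxiliary klt structure on the resolution, and verify that the two inputs (numerical $f$-triviality of $\mathcal{L}$ and $R^1f_*\mathcal{O}_{X'}=0$) together upgrade to the required $f$-linear triviality.
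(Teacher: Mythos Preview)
Your overall strategy matches the paper's: reduce to showing that $m f^{*}D$ descends to a Cartier divisor on $X$ for every integer $m\gg 0$, and then conclude that $D$ itself is Cartier. The paper likewise uses the relative base point free theorem for this descent. However, there is a genuine gap in your setup for applying base point freeness.

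You propose to work directly on a resolution $X'$ and to equip it with a klt boundary $B'$ built from the log resolution data together with an effective $f$-exceptional divisor $G$ whose negative is $f$-ample. Since $f^{*}D\equiv_{f}0$, the hypothesis of the relative base point free theorem becomes the requirement that $-(K_{X'}+B')$ be $f$-nef and $f$-big, i.e.\ that $X'$ be of Fano type over $X$. But writing $K_{X'}+\Delta'=f^{*}(K_{X}+\Delta)+E'$ with $\Delta',E'\geq 0$ having no common components, any boundary of the form $B'=\Delta'+\epsilon G$ gives $-(K_{X'}+B')\equiv_{f}-E'-\epsilon G$, and $-E'$ is in general \emph{not} $f$-nef: its intersection with an $f$-vertical curve contained in one exceptional component but meeting another component of $E'$ positively can be negative. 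No exceptional perturbation repairs this, because the coefficients of $E'$ are dictated by the discrepancies of $(X,\Delta)$ and cannot be chosen freely. Your fallback suggestion, upgrading numerical $f$-triviality to $f$-linear triviality using only $R^{1}f_{*}\mathcal{O}_{X'}=0$, is also insufficient: rational singularities kill the continuous part of the relative Picard functor but do not by themselves exclude torsion, so an input of base point free type really is needed.

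The paper closes this gap by first running a $(K_{X'}+\Delta')$-MMP over $X$, which contracts $E'$; the cone and contraction theorem guarantees that the Cartier property of the pullback of $D$ is preserved at each step. On the resulting $\mathbb{Q}$-factorial klt model $(X'',\Delta'')$ one has $K_{X''}+\Delta''\equiv_{X}0$, and now perturbing the boundary by a small multiple of an effective exceptional divisor with $f$-ample negative does make $X''$ of Fano type over $X$. The base point free theorem then applies cleanly and yields the descent for all $m\gg 0$, exactly as you intended in your final paragraph.
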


\begin{proof}
Because the Cartier property can be checked locally, we may assume that $X$ is affine. 
We may write
$$K_{X'}+\Delta'=f^{*}(K_{X}+\Delta)+E',$$
where $\Delta' \geq 0$ and $E' \geq 0$ have no common components. 
By replacing $f$ with a log resolution of $(X,\Supp \Delta)$, we may assume that $(X',\Delta')$ is a $\mathbb{Q}$-factorial klt pair. 

By running a $(K_{X'}+\Delta')$-MMP over $X$ with scaling of an ample divisor and applying \cite[Theorem 3.5]{birkar-flip} to $(X',\Delta')$ and $f$, we get $\phi\colon X'\dashrightarrow X''$, a finite sequence of steps of a $(K_{X'}+\Delta')$-MMP over $X$, that contracts $E'$. 
Then $\phi_{*}f^{*}D$ is Cartier by the cone and contraction theorem \cite[Theorem 3.7]{kollar-mori}. 
Thus, replacing $X'$ by $X''$, we may assume $E'=0$. 
Then $K_{X'}+\Delta' \sim_{\mathbb{Q},X}0$ and $\Delta'$ is big over $X$. 

By the base point free theorem \cite[Theorem 3.24]{kollar-mori}, for every integer $m\gg0$ it follows that $mf^{*}D$ is linearly equivalent to the pullback of a Cartier divisor on $X$. 
Then $mD$ is Cartier for all $m\gg0$, so $D$ is Cartier.  
\end{proof}

\begin{proof}[Proof of Theorem \ref{thm--lctrivialfibration-boundedfiber}]
Since $mf^{*}(K_{X}+\Delta)$ is Cartier for some resolution $f\colon X' \to X$ of $X$, it follows that the divisor $m\Delta=f_{*}(mf^{*}(K_{X}+\Delta))-mK_{X}$ is a Weil divisor. 
By \cite[Lemma 7.4]{birkar-nefpart}, there exists a positive integer $n'$, depending only on $d$, $m$, and $v$, such that a generalized lc pair $(Z,\boldsymbol{\rm B}_{Z}+\boldsymbol{\rm M})$ defined with the canonical bundle formula satisfies
\begin{itemize}
\item
 $n'(K_{X}+\Delta)\sim n'\pi^{*}(K_{Z}+\boldsymbol{\rm B}_{Z}+\boldsymbol{\rm M}_{Z})$, and
\item
$n' \boldsymbol{\rm M}$ is b-Cartier. 
\end{itemize}
By applying \cite[Theorem 1.7]{birkar-geometry-moduli} to $(F,\Delta|_{F})$ and $A|_{F}$, we may find a positive real number $t_{0}$, depending only on $d$, $m$, and $v$, such that $(F,\Delta|_{F}+t_{0}A|_{F})$ is lc. 
By shrinking $U$, we may assume that $\bigl(\pi^{-1}(U),(\Delta+t_{0}A)|_{\pi^{-1}(U)}\bigr)$ is lc. 
By shrinking $U$ again in order to remove the vertical part of $A$, we may assume that all components of $A|_{\pi^{-1}(U)}$ dominate $U$. 
Then we can remove the vertical part of $A$ with respect to $X \to Z$ without loss of generality, and therefore we may assume that all components of $A$ dominate $Z$. 

We put $D=K_{Z}+\boldsymbol{\rm B}_{Z}+\boldsymbol{\rm M}_{Z}$. 
By Lemma \ref{lem--Cartier-klt}, it is sufficient to prove the existence of a positive integer $n''$ depending on $d$, $m$, $v$, $n'$, and $t_{0}$ such that $n''\phi^{*}D$ is Cartier for some resolution $\phi \colon Z' \to Z$ of $Z$. 
Indeed, supposing the existence of such $n''$, then the positive integer $n:=mn'n''$ depends only on $d$, $m$, and $v$, and it follows that $n(K_{X}+\Delta)\sim n\pi^{*}(K_{Z}+\boldsymbol{\rm B}_{Z}+\boldsymbol{\rm M}_{Z})$, $n \boldsymbol{\rm M}$ is b-Cartier, and $n\phi^{*}(K_{Z}+\boldsymbol{\rm B}_{Z}+\boldsymbol{\rm M}_{Z})$ is Cartier for some resolution $\phi \colon Z' \to Z$ of $Z$. 
These facts show that this $n$ is the desired positive integer. 
From now on, we will prove the existence of such $n''$ as above. 
We will follow \cite[Proof of Lemma 7.3]{birkar-nefpart}. 

\begin{step2}\label{step1--lem--lctrivialfibration-boundedfiber}
In this step we construct a contraction $X' \to Z'$ such that $X'$ and $Z'$ are birational to $X$ and $Z$ respectively, and we define $\mathbb{Q}$-divisors $\Delta'$ and $A'$ on $X'$. 

Let $h\colon Z' \to Z$ be a log resolution of $(Z, \Supp D)$. Let $\Sigma'$ be a divisor on $Z'$ whose support contains $\Supp h^{*}D \cup {\rm Ex}(h)$. 
Shrinking $U$ and adding some prime divisors to $\Sigma'$ if necessary, we may assume that $Z'\setminus \Sigma' = h^{-1}(U)$ and the image of the vertical part of $\Delta$ maps into $Z \setminus U$. 
Then, $h$ is an isomorphism over $U$. 
Let $f \colon X' \to X$ be a resolution of $X$ such that $mf^{*}(K_{X}+\Delta)$ is Cartier. 
Replacing $f$ by an other resolution  of $X$ if necessary, we may assume that the induced map $\pi' \colon X' \dashrightarrow Z'$ is a morphism and $\bigl(X', \Supp\,(f^{-1}_{*}(\Delta+A)+\pi'^{*}\Sigma')\cup {\rm Ex}(f)\bigr)$ is log smooth. 
We define a $\mathbb{Q}$-divisor $\Delta'$ on $X'$ as follows: For a prime divisor $P$ on $X'$, we define 
$$
\mult_{P}(\Delta'):=\left\{
\begin{array}{lll}1&&\text{($P$ is $f$-exceptional or $\pi'(P)\subset \Sigma'$)}\\ \mult_{P}(f^{-1}_{*}\Delta)&&\text{(otherwise).}
\end{array}\right.
$$
We put $V=\pi^{-1}(U)$, $V'=\pi'^{-1}(Z'\setminus \Sigma')$, and $f_{V'}=f|_{V'}$. 
Since $Z'\setminus \Sigma' = h^{-1}(U)$, we have $V'=f^{-1}(V)$. 
We have the following diagrams.
 $$
\xymatrix{
X \ar@{->}[d]_{\pi}&X' \ar[l]_{f}\ar@{->}[d]^{\pi'}\\
Z&\ar[l]^{h}Z'
}
\qquad\qquad
\xymatrix{
V \ar[d]&V'\ar[l]_{f_{V'}}\ar[d]\\
U&Z'\setminus \Sigma'\ar[l]_(0.58){\simeq}
}
$$
We put $A'=f^{-1}_{*}A$. 
Since all components of $A$ dominate $Z$ and $\bigl(V, (\Delta+t_{0}A)|_{V}\bigr)$ is lc, $\Delta'+t_{0}A'$ is a boundary $\mathbb{R}$-divisor. 
Then $(X',\Delta'+t_{0}A')$ is lc. 
We may write
\begin{equation*}
\begin{split}
K_{X'}+\Delta'=f^{*}(K_{X}+\Delta)+E'+\Xi',
\end{split}
\end{equation*}
where $E'$ is an effective $f$-exceptional $\mathbb{Q}$-divisor whose components intersect $V'$ and $\Xi'$ is a $\mathbb{Q}$-divisor whose support is mapped into $\Sigma'$ by $\pi'$. 
We may write 
$$f_{V'}^{*}(A|_{V})=A'|_{V'}+E_{0}$$
for some $f_{V'}$-exceptional $\mathbb{Q}$-divisor $E_{0} \geq 0$ on $V'$. 
By the above relations we obtain
\begin{equation*}\tag{$1$}\label{proof-lem--lctrivialfibration-boundedfiber-(1)}
(K_{X'}+\Delta'+t_{0}A')|_{V'}=f_{V'}^{*}\bigl((K_{X}+\Delta+t_{0}A)|_{V}\bigr)+E'|_{V'}-t_{0}E_{0}
\end{equation*}
and $E'|_{V'}-t_{0} E_{0}$ is $f_{V'}$-exceptional. 
For any $f_{V'}$-exceptional prime divisor $P'$ on $V'$, the definition of $\Delta'$ implies $\mult_{P'}(\Delta'|_{V'})=1$. 
Since $\Delta'+t_{0}A'$ is a boundary $\mathbb{R}$-divisor, we have $\mult_{P'}((\Delta'+t_{0}A')|_{V'})=1$. 
Since the pair $\bigl(V, (\Delta+t_{0}A)|_{V}\bigr)$ is lc, we have 
$$\mult_{P'}(E'|_{V'}- t_{0}E_{0})=a\bigl(P',V,(\Delta+t_{0}A)|_{V}\bigr)\geq 0.$$ 
This implies that $E'|_{V'}-t_{0} E_{0}$ is effective. 
\end{step2}

\begin{step2}\label{step2--lem--lctrivialfibration-boundedfiber}
In this step we prove that $(X',\Delta'+t_{0}A')$ has a good minimal model over $Z'$. 

Since $\bigl(V,(\Delta+t_{0}A)|_{V}\bigr)$ is an lc pair and $K_{V}+\Delta|_{V}+t_{0}A|_{V} \sim_{\mathbb{Q},U}t_{0}A|_{V}$ is ample over $U$, the relation (\ref{proof-lem--lctrivialfibration-boundedfiber-(1)}) in Step \ref{step1--lem--lctrivialfibration-boundedfiber} and \cite[Lemma 2.15]{has-mmp} show that $\bigl(V', (\Delta'+t_{0}A')|_{V'}\bigr)$ has a good minimal model over $Z'\setminus \Sigma'$. 
By construction of $\Delta'$, we can find an effective $\mathbb{Q}$-divisor $T'$ on $X'$, which is a multiple of $\pi'^{*}\Sigma'$ by a positive rational number, such that $\Delta'-T'\geq 0$. 
Then the pair $(X',\Delta'-T'+t_{0}A')$ is lc and all lc centers of the pair intersect  $\pi'^{-1}(Z'\setminus \Sigma')$ (cf.~\cite[Lemma 2.27]{kollar-mori}). 
By \cite[Theorem 1.2]{has-mmp}, the lc pair $(X',\Delta'-T'+t_{0}A')$ has a good minimal model over $Z'$. 
Moreover, the relation $T'\sim_{\mathbb{Q},Z'}0$ shows 
$$K_{X'}+\Delta'-T'+t_{0}A'\sim_{\mathbb{Q},Z'}K_{X'}+\Delta'+t_{0}A'.$$
From these facts, it follows that $(X',\Delta'+t_{0}A')$ has a good minimal model over $Z'$ (see, for example, \cite[Proof of Lemma 3.6.9]{bchm}). 

\end{step2}

\begin{step2}\label{step3--lem--lctrivialfibration-boundedfiber}
In this step we define some varieties and $\mathbb{R}$-divisors. 

By running a $(K_{X'}+\Delta'+t_{0}A')$-MMP over $Z'$, we get a birational contraction
$$(X',\Delta'+t_{0}A') \dashrightarrow (X'',\Delta''+t_{0}A'')$$
over $Z'$ to a good minimal model $(X'',\Delta''+t_{0}A'')$. 
Let $g \colon X'' \to Y$ be the contraction over $Z'$ induced by $K_{X''}+\Delta''+t_{0}A''$. 
By the relation (\ref{proof-lem--lctrivialfibration-boundedfiber-(1)}) in Step \ref{step1--lem--lctrivialfibration-boundedfiber} and the ampleness of $(K_{X}+\Delta+t_{0}A)|_{V}\sim_{\mathbb{Q},U}t_{0}A|_{V}$ over $U$, we see that $K_{X''}+\Delta''+t_{0}A''$ is big over $Z'$. 
Hence $g$ is birational. 
We put $\Gamma=g_{*}(\Delta''+t_{0}A'')$ and we denote $Y \to Z'$ by $\pi_{Y}$. 
We have the following diagram
 $$
\xymatrix{
X \ar@{->}[d]_{\pi}&X' \ar[l]_{f}\ar@{-->}[r]\ar@{->}[rd]_(0.35){\pi'}& X'' \ar@{->}[d]\ar[r]^{g}& Y \ar[dl]^(0.35){\pi_{Y}}\\
Z&&Z'\ar[ll]^{h}
}
$$
such that $K_{Y}+\Gamma$ is ample over $Z'$ and 
$K_{X''}+\Delta''+t_{0}A''=g^{*}(K_{Y}+\Gamma)$. 
Let $G$ be a sufficiently general fiber of $\pi_{Y}$. 
By construction, we have
$${\rm vol}\bigl((K_{Y}+\Gamma)|_{G}\bigr)={\rm vol}\bigl((K_{X}+\Delta+t_{0}A)|_{F}\bigr)=t_{0}^{\dim F}{\rm vol}(A|_{F}).$$
Because ${\rm vol}(A|_{F}) = v$ and $t_{0} \leq 1$, which follows from the fact that $A$ is a Weil divisor, we see that ${\rm vol}\bigl((K_{Y}+\Gamma)|_{G}\bigr)\leq v$. 
\end{step2}

\begin{step2}\label{step4--lem--lctrivialfibration-boundedfiber}
In this step we prove that for the generic point $\eta$ of any irreducible component of $\Sigma'$, the multiplicity of the fiber of $\pi_{Y}$ over $\eta$ has an upper bound depending only on $d$, $m$, $t_{0}$, and $v$. 

We may assume that $Z'$ is a curve by cutting with hyperplane sections. 
We fix the generic point $\eta$ of any irreducible component $\Sigma'$. 
Then we may write
$$\pi_{Y}^{*}\eta=\sum_{i} \mu_{i}G_{i},$$ where $G_{i}$ are prime divisors on $Y$. 
For each $i$, let $G_{i}^{\nu}$ be the normalization of $G_{i}$. 
For every $i$, the inequality ${\rm vol}\bigl((K_{Y}+\Gamma)|_{G}\bigr)\leq v$ and the computation of the volumes (see, for example, \cite[Proposition 1.35]{kollar-mori}) imply
\begin{equation*}
\begin{split}
 v \geq G \cdot (K_{Y}+\Gamma)^{\dim G}\geq \mu_{i}G_{i}\cdot (K_{Y}+\Gamma)^{\dim G_{i}} =& \mu_{i}\cdot ((K_{Y}+\Gamma)|_{G_{i}^{\nu}})^{\dim G_{i}^{\nu}}
\\
=&\mu_{i}\cdot{\rm vol}\bigl((K_{Y}+\Gamma)|_{G_{i}^{\nu}}\bigr)>0. 
\end{split}
\end{equation*}
By the definition of $\Delta'$ in Step \ref{step1--lem--lctrivialfibration-boundedfiber},  $\lfloor \Delta' \rfloor$ contains all components of $\pi'^{*}\Sigma'$.  
Hence $G_{i}$ is a component of $\lfloor \Gamma \rfloor$. 
Since $m\Delta$ is a Weil divisor, which follows from the hypothesis of Theorem \ref{thm--lctrivialfibration-boundedfiber}, the definition of $\Delta'$ in Step \ref{step1--lem--lctrivialfibration-boundedfiber} shows that $m \Delta'$ is a Weil divisor on $X'$. 
Hence, the coefficients of $\Gamma$ belong to $\frac{1}{m}\mathbb{Z}_{\geq 0}\cup t_{0}\mathbb{Z}_{\geq 0}$. 
By applying divisorial adjunction to $(Y,\Gamma)$ and $G_{i}$ and applying the DCC for volumes \cite[Theorem 1.3]{hmx-acc}, the volume 
${\rm vol}\bigl((K_{Y}+\Gamma)|_{G_{i}^{\nu}}\bigr)$ 
is bounded from below by a positive real number depending only on $d$, $m$, and $t_{0}$. 
Thus, $\mu_{i}$ has an upper bound depending only on $d$, $m$, $t_{0}$, and $v$. 
This shows that for the generic point $\eta$ of any irreducible component of $\Sigma'$, the multiplicity of the fiber of $\pi_{Y}$ over $\eta$ has an upper bound depending only on $d$, $m$, $t_{0}$, and $v$. 
\end{step2}

\begin{step2}\label{step5--lem--lctrivialfibration-boundedfiber}
In this step we prove that $n''h^{*}D$ is Cartier for some positive integer $n''$ that depends only on $d$, $m$, $t_{0}$, $v$, and $n'$, where $n'$ is the positive integer defined at the start of this proof. 

We define a $\mathbb{Q}$-divisor $\Theta'$ on $X'$ by $K_{X'}+\Theta'=f^{*}(K_{X}+\Delta)$. 
Let $\Theta_{Y}$ be the birational transform of $\Theta'$ on $Y$. 
By the hypothesis of Theorem \ref{thm--lctrivialfibration-boundedfiber} that $mf^{*}(K_{X}+\Delta)$ is Cartier, the birational transform $m(K_{Y}+\Theta_{Y})$ is a Weil divisor on $Y$. 
Then there is a rational function $\sigma$ on $Y$ such that
\begin{equation*}\tag{$2$}\label{proof-lem--lctrivialfibration-boundedfiber-(2)}
mn'(K_{Y}+\Theta_{Y})+{\rm div}(\sigma) =mn' \pi_{Y}^{*}h^{*}D
\end{equation*}
as $\mathbb{Q}$-divisors. 
Moreover, the left hand side is a Weil divisor.  
We can write 
$$h^{*}D=\sum_{j}a_{j}D_{j},$$
where $D_{j}$ are prime divisors on $Z'$. 
By Step \ref{step4--lem--lctrivialfibration-boundedfiber}, the multiplicity of the fiber of $\pi_{Y}$ over the generic point $\eta$ of any irreducible component of $\Sigma'$ has an upper bound, which we denote by $\beta$, depending only on $d$, $m$, $t_{0}$, and $v$. 
Therefore, $\pi_{Y}^{*}D_{j}$ has a component $Q_{j}$ such that $\pi_{Y}(Q_{j})=D_{j}$ and $\mult_{Q_{j}}(\pi_{Y}^{*}D_{j})\leq \beta$. 
On the other hand, the relation (\ref{proof-lem--lctrivialfibration-boundedfiber-(2)}) implies $mn'a_{j}\cdot \mult_{Q_{j}}(\pi_{Y}^{*}D_{j}) \in \mathbb{Z}$. 
From these facts, we have $mn' \lfloor \beta \rfloor !a_{j} \in \mathbb{Z}$. 

We define $n'':=mn' \lfloor \beta \rfloor !$. 
Then $n''$ depends only on $d$, $m$, $t_{0}$,  $v$, and $n'$. 
Moreover, $n''h^{*}D$ is a Weil divisor. 
In particular, $n''h^{*}D$ is Cartier. 
\end{step2}

By defining $n:=mn'n''$ and $\phi:=h\colon Z' \to Z$, we complete the proof. 
If there is a klt pair structure on $Z$, then $n''D$ is Cartier by Lemma \ref{lem--Cartier-klt}. 
\end{proof}

\begin{proof}[Proof of Theorem \ref{thm--eff-base-point-free}]
Let $\pi \colon (X,\Delta) \to Z$ be the contraction as in Theorem \ref{thm--eff-base-point-free}. 
Since $A$ is big over $Z$, we can find an effective $\mathbb{Q}$-divisor $B \sim_{\mathbb{Q},Z}A$. 
We pick $t \in \mathbb{Q}_{>0}$ such that $(X,\Delta+tB)$ is klt. 
Since $B$ is big over $Z$, it follows from \cite{bchm} that $(X,\Delta+tB)$ has a good minimal model over $Z$. 
In particular, $(X,\Delta+tB)$ has a log canonical model
$$(X,\Delta+tB)\dashrightarrow (X',\Delta'+tB')$$
over $Z$. 
We take a common resolution $g\colon W \to X$ and $g' \colon W \to X'$ of $X\dashrightarrow X'$. 
Since $K_{X}+\Delta\sim_{\mathbb{Q},Z}0$, by the negativity lemma, we have
$g^{*}(K_{X}+\Delta)=g'^{*}(K_{X'}+\Delta')$. 
Thus, $e(K_{X'}+\Delta')$ is semi-ample and $X' \to Z$ is the contraction induced by $e(K_{X'}+\Delta')$. 
Furthermore, since $m(K_{X}+\Delta)$ is Cartier, $mg'^{*}(K_{X'}+\Delta')$ is Cartier, so  $m(K_{X'}+\Delta')$ is Cartier by Lemma \ref{lem--Cartier-klt}. 
Let $A'$ be the birational transform of $A$ on $X'$. 
Then 
$$A'\sim_{\mathbb{Q},Z}B'\sim_{\mathbb{Q},Z}\frac{1}{t}(K_{X'}+\Delta'+tB')$$
because $A\sim_{\mathbb{Q},Z}B\sim_{\mathbb{Q},Z}\frac{1}{t}(K_{X}+\Delta+tB)$. 
Thus, $A'$ is ample over $Z$.  
By these relations and the negativity lemma, we can write $g^{*}A=g'^{*}A'+E$ with an effective $g'$-exceptional $\mathbb{Q}$-divisor $E$ on $W$. 
From this fact, we have ${\rm vol}(A'|_{F'}) = v$, where $F'$ is a sufficiently general fiber of $X' \to Z$. 
Now it is easy to see that Theorem \ref{thm--eff-base-point-free} holds for $(X,\Delta) \to Z$ and $A$ if and only if Theorem \ref{thm--eff-base-point-free} holds for $(X',\Delta') \to Z$ and $A'$.  
Thus, replacing $(X,\Delta)$ and $A$ by $(X',\Delta')$ and $A'$ respectively, we may assume that $A$ is ample over $Z$. 

Since $m(K_{X}+\Delta)$ is Cartier, the coefficients of $\Delta$ belong to $\frac{1}{m}\mathbb{Z}_{>0}\cap[0,1]$, which is a finite set of rational numbers. 
By applying \cite[Corollary 1.4]{birkar-geometry-moduli} to general fibers of $\pi$, we can find a positive integer $I$, depending only on $d$ and $m$, such that we have 
$G \sim IA$ for some Weil divisor $G$ whose horizontal part, which we denote by $L$, is effective. 
Note that $L$ is not necessarily $\mathbb{Q}$-Cartier, but $L$ is a Weil divisor, and $L$ is linearly equivalent to the $\mathbb{Q}$-Cartier divisor $A$ over some open subset $U \subset Z$, hence $L|_{\pi^{-1}(U)}$ is $\mathbb{Q}$-Cartier. 
Then 
$${\rm vol}(L|_{F})={\rm vol}(IA|_{F})=I^{d-\dim Z}v,$$
where $F$ is a sufficiently general fiber of $\pi$. 

We apply Theorem \ref{thm--lctrivialfibration-boundedfiber} to $\pi \colon (X,\Delta) \to Z$ and $L$. 
Because there is a klt pair structure on $Z$ (\cite[Theorem 4.1]{ambro}), there exists $n' \in \mathbb{Z}_{>0}$, depending only on $d$, $m$, and $v$, such that $n'(K_{X}+\Delta)\sim \pi^{*}D$ for some Cartier divisor $D$ on $Z$. 
By  \cite[Theorem 4.1]{ambro} there is a klt pair $(Z,B)$ such that $K_{Z}+B\sim_{\mathbb{Q}}\frac{1}{n'}D$. 
Now $eD$ is ample by construction of $\pi$. 
By \cite[Theorem 1.1]{kollar-eff-basepoint-free} and \cite[Lemma 7.1]{fujino-eff-slc},  there exists $n'' \in \mathbb{Z}_{>0}$, depending only on $\dim Z$, such that $2n''eD$ is very ample. 
We denote $n''$ by $n''(\dim Z)$. 
Since $\dim Z$ satisfies $0 \leq \dim Z\leq d$, the positive integer $n:=2n'\prod_{i=0}^{d}n''(i)$ satisfies the condition of Theorem \ref{thm--eff-base-point-free}. 
\end{proof}

\section{Lc-trivial fibration with log big moduli part}\label{sec4}

In this section, we study lc-trivial fibrations with log big moduli parts.

\begin{thm}\label{thm--lctrivialfibration-boundedfiber-abundance}
For every $d$, $m \in \mathbb{Z}_{>0}$, and $v \in \mathbb{R}_{>0}$, there exists $n \in \mathbb{Z}_{>0}$, depending only on $d$, $m$, and $v$, satisfying the following.  
Let $\pi \colon (X,\Delta) \to Z$ be an lc-trivial fibration with log big moduli part, let $F$ be the sufficiently general fiber of $\pi$, and let $A \geq 0$ be a Weil divisor on $X$ such that 
\begin{itemize}
\item
$\dim X=d$, 
\item
$m(K_{X}+\Delta)$ is nef and Cartier, 
\item
$A|_{\pi^{-1}(U)}$ is $\mathbb{Q}$-Cartier and ample over $U$ for some open subset $U \subset Z$,
\item
$(F,\Delta|_{F}+tA|_{F})$ is an lc pair for some real number $t>0$, and 
\item
${\rm vol}(A|_{F}) = v$. 
\end{itemize}
Then $n(K_{X}+\Delta)$ is base point free. 
\end{thm}

\begin{proof}
By Theorem \ref{thm--lctrivialfibration-boundedfiber} there is $n'$ that depends on $d$, $m$, and $v$ such that a generalized lc pair $(Z,\boldsymbol{\rm B}_{Z}+\boldsymbol{\rm M})$ defined by the canonical bundle formula satisfies
\begin{itemize}
\item
 $n'(K_{X}+\Delta)\sim n'\pi^{*}(K_{Z}+\boldsymbol{\rm B}_{Z}+\boldsymbol{\rm M}_{Z})$, 
\item
$n' \boldsymbol{\rm M}$ is b-Cartier, and
\item
$n'\phi^{*}(K_{Z}+\boldsymbol{\rm B}_{Z}+\boldsymbol{\rm M}_{Z})$ is Cartier for some resolution $\phi \colon Z' \to Z$ of $Z$.   
\end{itemize}
Put $D=(K_{Z}+\boldsymbol{\rm B}_{Z}+\boldsymbol{\rm M}_{Z})$. 
By Lemma \ref{lem--Cartier-klt}, for any resolution $\psi \colon Z'' \to Z$, the Cartier property of $n'\phi^{*}D$ implies that $n'\psi^{*}D$ is Cartier. 
Since $(X,\Delta) \to Z$ is an lc-trivial fibration with log big moduli part, replacing $Z'$ by an appropriate log smooth Ambro model, we may assume that $(Z',\Supp \boldsymbol{\rm B}_{Z'})$ is log smooth and $\boldsymbol{\rm M}_{Z'}$ is log big with respect to $(Z',\boldsymbol{\rm B}_{Z'})$. 
Then 
$n'(K_{Z'}+\boldsymbol{\rm B}_{Z'}+\boldsymbol{\rm M}_{Z'})=n'\phi^{*}D$
 is Cartier. 

We can write $\boldsymbol{\rm B}_{Z'}=B'-E'$, where $B' \geq 0$ and $E' \geq 0$ have no common components. 
Then 
$$K_{Z'}+B'+\boldsymbol{\rm M}_{Z'}=\phi^{*}(K_{Z}+\boldsymbol{\rm B}_{Z}+\boldsymbol{\rm M}_{Z})+E'.$$ 
By running a $(K_{Z'}+B'+\boldsymbol{\rm M}_{Z'})$-MMP over $Z$, we get a birational contraction 
$$(Z',B'+\boldsymbol{\rm M}) \dashrightarrow (W,B'_{W}+\boldsymbol{\rm M})$$
over $Z$ such that $K_{W}+B'_{W}+\boldsymbol{\rm M}$ is the limit of movable divisors over $Z$. 
By applying \cite[Lemma 3.3]{birkar-flip} to the birational transform of $E'$, we see that the birational map $Z' \dashrightarrow W$ contracts $E'$. 
Then $B'_{W}=\boldsymbol{\rm B}_{W}$. 
Moreover, the divisor $n'(K_{Z'}+\boldsymbol{\rm B}_{Z'}+\boldsymbol{\rm M}_{Z'})$ is trivial over the extremal contractions of the MMP over $Z$. 
Pick $t\in (0,1]$ such that $Z' \dashrightarrow W$ is a sequence of steps of a $(K_{Z'}+(1-t)(B'+\boldsymbol{\rm M}_{Z'}))$-MMP. 
Since $(Z',B')$ is a $\mathbb{Q}$-factorial dlt pair and $\boldsymbol{\rm M}_{Z'}$ is nef and big, we can find a klt pair $(Z',\Gamma')$ such that $K_{Z'}+\Gamma'\sim_{\mathbb{R}}K_{Z'}+(1-t)(B'+\boldsymbol{\rm M}_{Z'})$. 
Then $Z' \dashrightarrow W$ is a sequence of steps of a $(K_{Z'}+\Gamma')$-MMP. 
By the cone and contraction theorem \cite[Theorem 3.7]{kollar-mori}, the divisor $n'(K_{W}+\boldsymbol{\rm B}_{W}+\boldsymbol{\rm M}_{W})$ is Cartier and it is the pullback of $K_{Z}+\boldsymbol{\rm B}_{Z}+\boldsymbol{\rm M}_{Z}$ to $W$. 
By \cite[Theorem 5.8]{has-iitakafibration}, there exists $n''$, depending only on $\dim Z$ and $n'$, such that $n''(K_{W}+\boldsymbol{\rm B}_{W}+\boldsymbol{\rm M}_{W})$ is base point free. 
We denote $n''$ by $n''(\dim Z,\,n')$, and we define 
$$n:=mn'\prod_{i=0}^{d}n''(i,\,n').$$ 
Let $f \colon X' \to X$ be a resolution of $X$ such that the induced map $\pi' \colon X' \dashrightarrow W$ is a morphism. 
Then $\pi'$ is a contraction and
$$f^{*}\bigl(n(K_{X}+\Delta)\bigr) \sim \pi'^{*}\bigl(n(K_{W}+\boldsymbol{\rm B}_{W}+\boldsymbol{\rm M}_{W})\bigr).$$
Since $n(K_{X}+\Delta)$ is Cartier, we see that $n(K_{X}+\Delta)$ is base point free. 
\end{proof}

\begin{thm}\label{thm--lctrivialfibration-boundedfiber-antiabundance}
For every $d$, $m \in \mathbb{Z}_{>0}$, and $v \in \mathbb{R}_{>0}$, there exists $n \in \mathbb{Z}_{>0}$, depending only on $d$, $m$, and $v$, satisfying the following.  
Let $\pi \colon (X,\Delta) \to Z$ be an lc-trivial fibration with log big moduli part, let $F$ be the sufficiently general fiber of $\pi$, and let $A \geq 0$ be a Weil divisor on $X$ such that 
\begin{itemize}
\item
$\dim X=d$, 
\item
$-m(K_{X}+\Delta)$ is nef and Cartier, 
\item
$A|_{\pi^{-1}(U)}$ is $\mathbb{Q}$-Cartier and ample over $U$ for some open subset $U \subset Z$,
\item
$(F,\Delta|_{F}+tA|_{F})$ is an lc pair for some real number $t>0$, and 
\item
${\rm vol}(A|_{F}) = v$. 
\end{itemize}
Then $-n(K_{X}+\Delta)$ is base point free. 
In particular, $-(K_{X}+\Delta)$ is semi-ample. 
\end{thm}

\begin{proof}
By Theorem \ref{thm--lctrivialfibration-boundedfiber} there is $n'$ that depends on $d$, $m$, and $v$ such that a generalized lc pair $(Z,\boldsymbol{\rm B}_{Z}+\boldsymbol{\rm M})$ defined by the canonical bundle formula satisfies
\begin{itemize}
\item
 $n'(K_{X}+\Delta)\sim n'\pi^{*}(K_{Z}+\boldsymbol{\rm B}_{Z}+\boldsymbol{\rm M}_{Z})$, 
\item
$n' \boldsymbol{\rm M}$ is b-Cartier, and
\item
$n'\phi^{*}(K_{Z}+\boldsymbol{\rm B}_{Z}+\boldsymbol{\rm M}_{Z})$ is Cartier for some resolution $\phi \colon Z' \to Z$ of $Z$.   
\end{itemize}
Put $D=(K_{Z}+\boldsymbol{\rm B}_{Z}+\boldsymbol{\rm M}_{Z})$. 
As in the proof of Theorem \ref{thm--lctrivialfibration-boundedfiber-abundance}, replacing $Z'$ by an appropriate log smooth Ambro model, we may assume that $(Z',\Supp \boldsymbol{\rm B}_{Z'})$ is log smooth and $\boldsymbol{\rm M}_{Z'}$ is log big with respect to $(Z',\boldsymbol{\rm B}_{Z'})$. 
Then the divisor
$$n'(K_{Z'}+\boldsymbol{\rm B}_{Z'}+\boldsymbol{\rm M}_{Z'}-2\phi^{*}D)=-n'\phi^{*}D$$
 is Cartier. 
Then the proof of Theorem \ref{thm--lctrivialfibration-boundedfiber-abundance} works by replacing $\boldsymbol{\rm M}$ with $\boldsymbol{\rm M}-2\overline{D}$, where $\overline{D}$ is the Cartier closure of $D$ as a b-divisor. 
\end{proof}

Finally, we prove the minimal model theory for lc pairs admiting an lc-trivial fibration with log big moduli part. 
Note that we do not need Theorem \ref{thm--lctrivialfibration-boundedfiber} for the proof.

\begin{thm}\label{thm--minimalmodeltheory-logbigmoduli}
Let $(X,\Delta)$ be a projective lc pair admitting an lc-trivial fibration with log big moduli part. 
Then $(X,\Delta)$ has a good minimal model or a Mori fiber space. 
\end{thm}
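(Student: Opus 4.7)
The plan is to descend the question to a minimal model problem on a log smooth Ambro model, solve it there using the minimal model theory for generalized lc pairs with log big nef part developed in \cite{has-iitakafibration}, and then lift the resulting good minimal model or Mori fiber space back to $(X,\Delta)$ via the canonical bundle formula.

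First, I would fix a log smooth Ambro model $h \colon Z' \to Z$ for which $\boldsymbol{\rm M}_{Z'}$ is log big with respect to $(Z', \boldsymbol{\rm B}_{Z'})$, and let $B'$ be the effective part of $\boldsymbol{\rm B}_{Z'}$. Then $(Z', B', \boldsymbol{\rm M})$ is a generalized lc pair whose nef part is log big. Since $K_{X}+\Delta \sim_{\mathbb{Q}} \pi^{*}(K_{Z}+\boldsymbol{\rm B}_{Z}+\boldsymbol{\rm M}_{Z})$ and the negative part $E'$ of $\boldsymbol{\rm B}_{Z'}$ is $h$-exceptional, pseudo-effectivity of $K_{X}+\Delta$ is equivalent to pseudo-effectivity of $K_{Z'}+B'+\boldsymbol{\rm M}_{Z'}$; the argument parallels the MMP over $Z$ contracting $E'$ that is used in the proof of Theorem \ref{thm--lctrivialfibration-boundedfiber-abundance}.

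Next, I would invoke \cite{has-iitakafibration} for the generalized lc pair $(Z', B', \boldsymbol{\rm M})$. In the pseudo-effective case this produces a good minimal model $(W, B_{W}, \boldsymbol{\rm M})$ with $K_{W}+B_{W}+\boldsymbol{\rm M}_{W}$ semi-ample; in the non-pseudo-effective case it produces a generalized Mori fiber space. To transfer these structures back to $X$, I would choose a log resolution $f \colon X' \to X$ admitting a morphism $\pi' \colon X' \to W$, define $\Delta'$ by $K_{X'}+\Delta' = f^{*}(K_{X}+\Delta)+E_{X'}$ with $\Delta', E_{X'} \geq 0$ sharing no components, and use the canonical bundle formula identification $f^{*}(K_{X}+\Delta)\sim_{\mathbb{Q}} \pi'^{*}(K_{W}+B_{W}+\boldsymbol{\rm M}_{W})$. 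In the pseudo-effective case, the semi-ampleness on $W$ makes $K_{X'}+\Delta'$ abundant, so by the minimal model existence results of \cite{has-mmp} applied to $(X', \Delta')$ one obtains a good minimal model, which descends to a good minimal model of $(X,\Delta)$. In the non-pseudo-effective case, the generalized Mori fibration on $W$ pulls back to a covering family of curves on $X$ along which $K_{X}+\Delta$ is negative, and a $(K_{X}+\Delta)$-MMP, which exists by the same results, terminates with a Mori fiber space.

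The hard part is the lifting step. Even once the good minimal model or Mori fiber space is produced on the base, one must carefully arrange the birational modifications on $X'$ so that the log canonical model structure pulled back from $W$ actually realizes a good minimal model of $(X,\Delta)$, and in the fibration case one must verify that the output of the MMP on $X$ genuinely has Mori fiber space structure compatible with the one on $W$. The control of exceptional and vertical divisors throughout these MMPs, which is the standard technical burden when working with lc-trivial fibrations, is the main obstacle; the abundance coming from log bigness of $\boldsymbol{\rm M}_{Z'}$ together with \cite{has-mmp} should make the argument go through.
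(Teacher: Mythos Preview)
Your overall strategy coincides with the paper's: pass to a log smooth Ambro model, apply \cite{has-iitakafibration} to the generalized lc pair $(Z',B',\boldsymbol{\rm M})$ with log big nef part, and then lift back to $X$. The difference is entirely in the lifting step, and that is where your sketch has real gaps.

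First, the identification $f^{*}(K_{X}+\Delta)\sim_{\mathbb{Q}}\pi'^{*}(K_{W}+B_{W}+\boldsymbol{\rm M}_{W})$ is not correct as written. The map $Z'\dashrightarrow W$ to the good minimal model is only a birational contraction, so there is no morphism $\pi'\colon X'\to W$ in general; and even after passing to a common resolution, the pullback of $K_{W}+B_{W}+\boldsymbol{\rm M}_{W}$ differs from the pullback of $K_{Z}+\boldsymbol{\rm B}_{Z}+\boldsymbol{\rm M}_{Z}$ by the image of the effective exceptional divisor $E'$, which an absolute MMP need not contract. Second, \cite{has-mmp} does not furnish the implication ``abundant $\Rightarrow$ good minimal model'' for lc pairs that you are implicitly using; that is the content of \cite[Theorem 1.1]{bhzariski} or \cite[Theorem 2.23]{has-finite}.

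The paper sidesteps both issues by not attempting to build a morphism to $W$ at all. Once $(Z',B',\boldsymbol{\rm M})$ has a good minimal model, it simply notes that $K_{Z}+\boldsymbol{\rm B}_{Z}+\boldsymbol{\rm M}_{Z}$ birationally admits a Nakayama--Zariski decomposition with semi-ample positive part, transfers this property to $K_{X}+\Delta$ via \cite[III, 5.17 Corollary]{nakayama} applied to a suitable base change of $X\to Z$, and then concludes directly from \cite[Theorem 2.23]{has-finite} (equivalently \cite[Theorem 1.1]{bhzariski}). If you replace your lifting paragraph by this Nakayama--Zariski argument and cite the correct existence theorem, your proof becomes the paper's.
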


\begin{proof}
If $K_{X}+\Delta$ is not pseudo-effective, then $(X,\Delta)$ has a Mori fiber space by \cite[Theorem 1.7]{hashizumehu}. 
Hence we may assume that $K_{X}+\Delta$ is pseudo-effective. 
Let $(X,\Delta) \to Z$ be an lc-trivial fibration with log big moduli part, and let $\phi \colon Z' \to Z$ be a log smooth Ambro model such that $\boldsymbol{\rm M}_{Z'}$ is log big with respect to $(Z',\boldsymbol{\rm B}_{Z'})$. 
We may write
$$K_{Z'}+B'+\boldsymbol{\rm M}_{Z'}=\phi^{*}(K_{Z}+\boldsymbol{\rm B}_{Z}+\boldsymbol{\rm M}_{Z})+E'$$
where $B' \geq 0$ and $E' \geq 0$ have no common components. 
By \cite[Theorem 1.1]{has-iitakafibration}, the generalized lc pair $(Z',B'+\boldsymbol{\rm M})$ has a good minimal model. 
By \cite[Section 4]{bhzariski}, $K_{Z}+\boldsymbol{\rm B}_{Z}+\boldsymbol{\rm M}_{Z}$ birationally has a Nakayama--Zariski decomposition (for definition of the Nakayama--Zariski decomposition, see \cite[III, 1.12. Definition]{nakayama}) with semi-ample positive part. 
Let $g \colon Z'' \to Z$ be a resolution of $Z$ such that  $P_{\sigma}(g^{*}(K_{Z}+\boldsymbol{\rm B}_{Z}+\boldsymbol{\rm M}_{Z}))$, which is the positive part of the Nakayama--Zariski decomposition of $g^{*}(K_{Z}+\boldsymbol{\rm B}_{Z}+\boldsymbol{\rm M}_{Z})$, is semi-ample. 
We take a resolution $h \colon X' \to X$ of $X$ such that the induced map $\psi \colon X' \dashrightarrow Z''$ is a morphism.  
 $$
\xymatrix{
X \ar[d]&X' \ar[l]_{h}\ar[d]^{\psi}\\
Z&Z''\ar[l]^{g}
}
$$
Then $h^{*}(K_{X}+\Delta)\sim_{\mathbb{Q}}\psi^{*}g^{*}(K_{Z}+\boldsymbol{\rm B}_{Z}+\boldsymbol{\rm M}_{Z})$. 
By applying \cite[III, 5.17 Corollary]{nakayama} to $\psi$, we obtain the following relation between the positive parts of the Nakayama--Zariski decompositions 
$$P_{\sigma}(h^{*}(K_{X}+\Delta)) \sim_{\mathbb{Q}}\psi^{*}P_{\sigma}(g^{*}(K_{Z}+\boldsymbol{\rm B}_{Z}+\boldsymbol{\rm M}_{Z})).$$
Therefore the positive part of the Nakayama--Zariski decomposition of $h^{*}(K_{X}+\Delta)$ is semi-ample.  
By applying \cite[Theorem 2.23]{has-finite} (see also \cite[Theorem 1.1]{bhzariski}) and \cite[Theorem 1.7]{hashizumehu} to $(X,\Delta)$, the existence of a good minimal model of $(X,\Delta)$ follows.  
\end{proof}

%%%%%%%%%%%%%%%


\begin{thebibliography}{BCHM10}

%\bibitem[AK00]{ak}D.~Abramovich, K.~Karu, Weak semistable reduction in characteristic $0$, Invent. math. {\textbf{139}} (2000), no. 2, 241--273.


\bibitem[A04]{ambro1}F.~Ambro, Shokurov's boundary property, J. Differential Geom. {\textbf{67}} (2004), no.~2, 229--255. 



\bibitem[A05]{ambro}F.~Ambro, The moduli $b$-divisor of an lc-trivial fibration, Compos. Math. {\textbf{141}} (2005), no. 2, 385--403. 


%\bibitem[AmKo19]{ambrokollar} F.~Ambro, J.~Koll\'ar, {\em Minimal models of semi-log-canonical pairs}, in Moduli of K-stable varieties, Springer INdAM Ser., {\textbf{31}}, Springer, Cham (2019), 1???�???�?13. 

%\bibitem[Ara]{araujo}C.~Araujo, Rationally connected varieties, \url{http://preprint.impa.br/FullText/Araujo_Wed_Oct_27_16_20_11_EDT_2004.html/GHS_preprint.pdf}%, \pageref{araujo}

%\bibitem[B1]{birkariitaka} C.~Birkar, Iitaka conjecture $C_{n,m}$ in dimension six, Compos.  Math. {\textbf{145}} (2009), no. 6, 1442--1446. 

%\bibitem[B2]{birkar1} C.~Birkar, On existence of log minimal models, Compos. Math. {\textbf{146}} (2010), no. 4, 919--928.

%\bibitem[B1]{birkar-existII}C.~Birkar, On existence of log minimal models I\!I, J. Reine Angew Math. {\textbf{658}} (2011), 99--113.

%\bibitem[B1]{birkar-09} Log minimal models according to Shokurov, Algebra Number Theory {\textbf{3}} (2009), no. 8, 951--958.

\bibitem[B12]{birkar-flip} C.~Birkar, Existence of log canonical flips and a special LMMP, Publ. Math. Inst. Hautes \'Etudes Sci. {\textbf{115}} (2012), no. 1, 325--368.

%\bibitem[B2]{birkar-bab} C.~Birkar, Singularities of linear systems and boundedness of Fano varieties, preprint (2017), arXiv:1609.05543v1.

\bibitem[B16]{birkar-fib}
C.~Birkar, Singularities on the base of a Fano type fibration, J. Reine Angew. Math.,  {\textbf{715}} (2016), 125--142.

\bibitem[B18]{birkar-logCY}
C.~Birkar, Log Calabi-Yau fibrations, preprint (2018), arXiv:1811.10709v2. 

\bibitem[B19]{birkar-compl}
C.~Birkar, 
Anti-pluricanonical systems on Fano varieties, 
Ann. of Math., {\textbf{19}} (2019), no.~2, 345--463. 

\bibitem[B20]{birkar-geometry-moduli}
C.~Birkar, 
Geometry of polarised varieties, to appear in Publ. Math. Inst. Hautes \'Etudes Sci.

\bibitem[B21]{birkar-nefpart}
C.~Birkar, 
Boundedness and volume of generalised pairs, preprint (2021), arXiv:2103.14935v2. 


%\bibitem[BC]{birkar-fiber}
%C.~Birkar, J.~Chen,
%Varieties fibred over abelian varieties with fibres of log general type, 
%Advances in Math. {\textbf{270}} (2015), 206--222.

\bibitem[BCHM10]{bchm}C.~Birkar, P.~Cascini, C.~D.~Hacon, J.~M\textsuperscript{c}Kernan, Existence of minimal models for varieties of log general type, J. Amer. Math. Soc. {\textbf{23}} (2010), no. 2, 405--468.

%\bibitem[BH1]{birkarhu-arg}C.~Birkar, Z.~Hu, Log canonical pairs with good augmented base loci, Compos. Math. {\textbf{150}} (2014), no. 4,579--592. 

\bibitem[BiHa22]{birkar-hacon} C.~Birkar, C.~D.~Hacon,  Variations of generalised pairs, preprint (2022), arXiv:2204.10456v1. 


\bibitem[BiHu14]{bhzariski} C.~Birkar, Z.~Hu, Polarized pairs, log minimal models, and Zariski decompositions, Nagoya Math. J. {\textbf{215}} (2014), 203--224. 



%\bibitem[BDPP]{bdpp} S.~Boucksom, J.-P.~Demailly, M.~P\u{a}un, T.~Peternell, The pseudo-effective cone of a compact K\"ahler manifold and varieties of negative Kodaira dimension, J. Algebraic. Geom., {\textbf{22}} (2013), no. 2, 201--248.


\bibitem[BZ16]{bz} C.~Birkar, D.~Q.~Zhang, Effectivity of Iitaka fibrations and pluricanonical systems of polarized pairs, Publ. Math. Inst. Hautes \'Etudes Sci. {\textbf{123}} (2016), no. 1, 283--331.

%\bibitem[C]{gab-logspectrum}
%G. Di Cerbo, 
%On Fujita's log spectrum conjecture, 
%arXiv:1210.5324, (2012), to appear in Math. Ann.

%\bibitem{corti} A.~Corti, Adjunction of log divisors,  in {\em Flips and abundance for algebraic threefolds}, Ast\'erisque {\textbf{211}} (1992), 171--182.


%\bibitem[CKP]{ckp}F.~Campana, V.~Koziarz, M.~P\u{a}un, Numerical character of the effectivity of adjoint line bundle, Ann. Inst. Fourier {\textbf{62}} (2012), no. 1, 107--119. 


%\bibitem[C]{cao} J.~Cao, Kodaira dimension of algebraic fiber spaces over surfaces, preprint (2015), arXiv1511.07048v2. 

%\bibitem[CP]{cp} J.~Cao, M.~P\u{a}un, Kodaira dimension of algebraic fiber spaces over abelian varieties, Invent. Math. {\textbf{207}} (2017), no. 1, 345--387.

%\bibitem[CL]{cortilazic}A.~Corti, V,~Lazi\'c, New outlook of minimal model program, II, Math. Ann. {\textbf{356}} (2013), no. 2, 617--633. 

%\bibitem[dFH]{dfh} T. de Fernex, C.~D.~Hacon, Singularities on normal varieties, Compos. Math. {\textbf{145}} (2009), no. 2, 393--414. 

%\bibitem[DHP]{dhp}J.-P. Demailly, C.~D.~Hacon, M.~P\u{a}un, Extension theorems, non-vanishing and the existence of good minimal models, Acta Math. {\textbf{210}} (2013), no. 2, 203--259.

%\bibitem[E]{eckl}T.~Eckl, Numerical analogues of the Kodaira dimension and the abundance conjecture, Manuscripta Math. {\textbf{150}} (2016), no. 3-4, 337--356.

%\bibitem[FHS21]{elliptic-fib-bounded} S.~Filipazzi, C.~D.~Hacon, R.~Svaldi, boundedness of elliptic Calabi-Yau threefolds, preprint (2021), arXiv:2112.01352v1. 


\bibitem[FL19]{floris-lazic} E.~Floris, V.~Lazi\'c, On the B-semiampleness conjecture, \'Epijournal de G\'eom\'etrie Alg\'ebrique {\textbf{3}} (2019), epiga:5063.

%\bibitem[F1]{fujino1}
%O.~Fujino, 
%Abundance theorem for semi log canonical threefolds, 
%Duke Math. J. {\textbf{102}} (2000), no. 3, 513--532.

%\bibitem[F1]{fujino-toric}O.~Fujino, Note on toric varieties from Mori theoretic viewpoint, 

%\bibitem[F07]{fujino-sp-ter}O.~Fujino, {\it Special termination and reduction to pl flips.} In Flips for $3$-folds and $4$-folds, Oxford University Press (2007).

%\bibitem[F1]{fujino-lcring}O.~Fujino, Finite generation of the log canonical ring in dimension four, Kyoto J. Math. {\textbf{50}} (2010), no. 4, 671--684.

%\bibitem[F09]{fujino-eff-basepointfree} O.~Fujino, Effective base point free theorem for log canonical pairs-Koll\'ar type theorem, Tohoku Math. J. {\textbf{61}} (2009), 475--481


%\bibitem[F11]{fujino-fund}O.~Fujino, Fundamental theorems for the log minimal model program, Publ. Res. Inst. Math. Sci. {\textbf{47}} (2011), no. 3, 727--789. 


%\bibitem[F2]{fujino}
%O.~Fujino, 
%On injectivity, vanishing and torsion-free theorems for algebraic varieties,
%Proc. Japan Acad.. {\textbf{85}}, Ser.A (2009), no. 8, 95--100.

%\bibitem[F2]{fujinonon-van}O.~Fujino, Non-vanishing theorem for log canonical pairs, J. Algebraic Geom. {\textbf{20}} (2011), no. 4, 771--783. 

%\bibitem[F3]{fujino-maxalb}
%O.~Fujino, 
%On maximal albanese dimensional varieties, 
%Proc. Japan Acad.. {\textbf{89}}, Ser.A (2013), no. 8, 92--95.


%\bibitem[F3]{fujino-remarks}O.~Fujino, Some remarks on the minimal model program for log canonical pairs, J. Math. Sci. Univ. Tokyo {\textbf{22}} (2015), no. 1, 149--192. 

%\bibitem[F2]{fujino-noteiitaka} O.~Fujino, Notes on the weak positivity theorems, Adv, Stud. Pure. Math. {\textbf{75}} (2017), Algebraic Varieties and Automorphism Groups--Kyoto, 2017, 73--118. 

%\bibitem[F3]{fujino-book}O.~Fujino, {\em Foundations of the minimal model program}, MSJ Mem. \textbf{35}, Mathematical Society in Japan, Tokyo, 2017. 

%\bibitem[F3]{fujinoiitaka} O.~Fujino, On subadditivity of the logarithmic Kodaira dimension,  J. Math. Soc. Japan {\textbf{69}} (2017), no. 4, 1565--1581. 

%\bibitem[F4]{fujino-subadd-correct} O.~Fujino, Corrigendum: On subadditivity of the logarithmic Kodaira dimension, preprint (2019).  

%\bibitem[Ft]{fujita}
%T.~Fujita, 
%Fractionally logarithmic canonical rings of algebraic surfaces, 
%J. Fac. Sci. Univ. Tokyo Sect. IA Math. {\textbf{30}} (1984), no. 3, 685--696.

\bibitem[F17]{fujino-eff-slc} O.~Fujino, Effective basepoint-free theorem for semi-log
canonical surfaces, Publ. Res. Inst. Math. Sci. {\textbf{53}} (2017), no. 3, 349--370. 


\bibitem[FG12]{fg-bundle} O.~Fujino, Y.~Gongyo, On canonical bundle formulas and subadjunctions, Michigan Math. J. {\textbf{61}} (2012), no. 2, 255-264. 


%\bibitem[FG1]{fujino-gongyo}O.~Fujino, Y.~Gongyo, Log pluricanonical representations and abundance conjecture, Compos. Math. {\textbf{150}} (2014) no. 4, 593--620.


\bibitem[FG14]{fg-lctrivial}O.~Fujino, Y.~Gongyo, On the moduli b-divisors of lc-trivial fibrations, Ann. Inst. Fourier {\textbf{64}} (2014), no. 4, 1721--1735. 


%\bibitem[FG2]{fg-lcring} O.~Fujino, Y.~Gongyo, On log canonical rings, Adv. Stud. Pure Math., {\textbf{74}} (2017), Higher dimensional algebraic geometry in honour of Professor Yujiro Kawamata's sixtieth birthday, 159--169,


%\bibitem[FM]{fujino-mori}O.~Fujino, S.Mori, A canonical bundle formula, J. Differential Geom. {\textbf{56}} (2000), no. 1, 167--188.

%\bibitem[FG3]{fujino-gongyo2}
%O.~Fujino, Y.~Gongyo, 
%On the miduli b-divisors of lc-trivial fibrations, 
%Annakes de I'nstitut Fourier, {\textbf{64}} (2014), no. 4, 1721--1735.

%\bibitem[Fk]{fukuda}
%S.~Fukuda, 
%On numerically effective log canonical divisors, 
%Int. J. Math. Math. Sci. {\textbf{30}} (2002), no. 9, 521--531. 


%\bibitem[G1]{gongyo}
%Y.~Gongyo, 
%Abundance theorem for numerically trivial log canonical divisors of semi-log canonical pairs, 
%J. Algebraic Geom., {\textbf{22}} (2013), 549--564.

%\bibitem[G]{gongyo1}Y.~Gongyo, On the minimal model theory for dlt pairs of numerical log kodaira dimension zero, Math. Res. Lett., {\textbf{18}} (2011) , no. 5, 991--1000.

%\bibitem[G]{gongyo-nonvanishing}Y.~Gongyo, Remarks on the non-vanishing conjecture, Adv. Stud. Pure Math. {\textbf{65}} (2015), Algebraic geometry in East Asia--Taipei 2011, 107--116.


%\bibitem[GK]{grassikollar} A.~Grassi, J.~Koll\'ar, Log canonical models, in {\em Flips and abundance for algebraic threefolds}, Ast\'erisque {\textbf{211}} (1992), 29--45.


%\bibitem[GL]{gongyolehmann}Y.~Gongyo, B.~Lehmann, Reduction maps and minimal model theory, Compos. Math. {\textbf{149}} (2013), no. 2, 295--308.

%\bibitem[GHS]{ghs}T.~Graber, J.~Harris, J.~Starr, Families of rationally connected varieties, J. Amer. Math. Soc. {\textbf{16}} (2003), no. 1, 57--67. 

%\bibitem[Har]{hartshorne} 
%R.~Hartshorne, 
%{\em{Algebraic geometry}}, 
%Graduate Texts in Mathematics, No. {\textbf{52}}. Springer-Verlag, 
%New York-Heidelberg, 1977.

%\bibitem[HM]{hm} C.~D.~Hacon, J.~M\textsuperscript{c}Kernan, Boundedness of pluricanonical maps of varieties of general type, Invent. math. {\textbf{166}} (2006), no. 1, 1--25. 


%\bibitem[HMX14]{hmx-birat}C.~D.~Hacon, J.~M\textsuperscript{c}Kernan, C.~Xu, On the birational automorphisms of varieties of general type, Ann. of Math. (2) {\textbf{177}} (2013), no. 2, 1077--1111. 

\bibitem[HMX14]{hmx-acc}C.~D.~Hacon, J.~M\textsuperscript{c}Kernan, C.~Xu, ACC for log canonical thresholds, Ann. of Math. (2) {\textbf{180}} (2014), no. 2, 523--571. 

%\bibitem[HMX14b]{hmx} C.~D.~Hacon, J.~M\textsuperscript{c}Kernan, C.~Xu, Boundedness of moduli of varieties of general type, preprint (2014), arXiv:1412.1186v2

%\bibitem[HX13]{haconxu-lcc}C.~D.~Hacon, C.~Xu, Existence of log canonical closures, Invent. Math. {\textbf{192}} (2013), no. 1, 161--195. 

%\bibitem[HX2]{haconxu}C.~D.~Hacon, C.~Xu, On finiteness of $B$-representation and semi-log canonical abundance, Adv, Stud. Pure. Math. {\textbf{70}} (2016), Minimal Models and extremal rays--Kyoto, 2011, 361--378. 


%\bibitem[H]{has-ab}K.~Hashizume, Remarks on the abundance conjecture, preprint (2015), arXiv:1509.04626.

%\bibitem[Ha]{has-trivial} K.~Hashizume, Minimal model theory for relatively trivial log canonical pairs, Ann. Inst. Fourier (Grenoble) {\textbf{68}} (2018), no. 5, 2069--2107.


%\bibitem[Ha18]{has-trivial} K.~Hashizume, Minimal model theory for relatively trivial log canonical pairs, Ann. Inst. Fourier (Grenoble) {\textbf{68}} (2018), no.~5, 2069--2107. 


\bibitem[Ha19]{has-mmp} K.~Hashizume, Remarks on special kinds of the relative log minimal model program, Manuscripta Math. {\textbf{160}} (2019), no.~3, 285--314. 

%\bibitem[Ha20a]{has-class} K.~Hashizume, A class of singularity of arbitrary pairs and log canonicalizations, Asian J. Math, {\textbf{24}} (2020), no. 2, 207--238. 

%\bibitem[Ha2]{has-cy} K.~Hashizume, Non-vanishing theorem for lc pairs admitting a Calabi--Yau pair, preprint (2017), arXiv:1708.01851v1.

\bibitem[Ha20]{has-finite}
K.~Hashizume, 
Finiteness of log abundant log canonical pairs in log minimal model program with scaling, to appear in Michigan Math. J. 


%\bibitem[Ha20c]{has-nonvan-gpair} K.~Hashizume, Non-vanishing theorem for generalized log canonical pairs with a polarization, preprint (2020), arXiv:2012.15038v1. 

\bibitem[Ha22]{has-iitakafibration}
K.~Hashizume, 
Iitaka fibrations for dlt pairs polarized by a nef and log big divisor, Forum Math. Sigma.  {\textbf{10}} (2022), e85.  


\bibitem[HH20]{hashizumehu} K.~Hashizume, Z.~Hu, On minimal model theory for log abundant lc pairs, J. Reine Angew. Math.,  {\textbf{767}} (2020), 109--159.



%\bibitem[Hu]{bh}Z.~Hu, Log canonical pairs with boundaries containing ample divisors, preprint (2017), arXiv:1712.07219v1.  



%\bibitem[Hu]{huiitaka} Z.~Hu, Log canonical pairs over varieties with maximal Albanese dimension, Pure Appl. Math. Q. {\textbf{12}} (2016), no. 4, 543--571.


\bibitem[Hu20]{hu-lctrivial-b-div} Z.~Hu, Log abundance of the moduli b-divisors of lc-trivial fibrations, arXiv:2003.14379v3. 


%\bibitem[I]{iitaka} S.~Iitaka, Algebraic geometry: an introduction to the birational geometry of algebraic varieties, Springer-Verlag, 1982.


\bibitem[J22]{jiao-CY-fib} J.~Jiao, Boundedness of polarised Calabi-Yau fibrations, preprint (2022), arXiv:2202.07238v2. 


%\bibitem[K]{kollar}
%J.~Koll\'ar,
%Adjunction and discrepancies, in {\em Flips and abundance for algebraic threefolds}, 
%Ast\'erisque {\textbf{211}} (1992), 183--192.

%\bibitem[Ka96]{kawamata-subad-codim2} Y.~Kawamata, Subadjunction of log canonical divisors for a subvariety of codimension $2$, {\em{Birational algebraic geometry (Baltimore, MD, 1996)}}, 79--88, Contemp. Math., \textbf{207}, Amer. Math. Soc., Providence, RI, 1997. 

\bibitem[Ka98]{kawamata-subad-II} 
Y.~Kawamata, 
Subadjunction of log canonical divisors, II, 
Amer. j. Math., \textbf{120} (1998), no.~5, 893--899. 

%\bibitem[K]{kawamata}Y.~Kawamata, Variation of mixed Hodge structures and the positivity for algebraic fiber spaces, Adv. Stud. Pure Math. {\textbf{65}} (2015), Algebraic geometry in East Asia--Taipei 2011, 27--57.


%\bibitem[K]{kawamata}Y.~Kawamata, On the abundance theorem in the case of numerical Kodaira dimension zero, Amer. J. Math. {\textbf{135}} (2013), no.1, 115--124.

%\bibitem[K1]{kawamataiitakacurve} Y. Kawamata, Kodaira dimension of algebraic fiber spaces over curves, Invent. Math. {\textbf{66}} (1982), no. 1, 57--71.

%\bibitem[K2]{kawamataiitakafiber}   Y. Kawamata, Minimal models and the Kodaira dimension of algebraic fiber spaces. J. Reine Angew. Math. {\textbf{363}} (1985), 1--46.

%\bibitem[KMM]{kmm-mmp}Y.~Kawamata, K.~Matsuda, K.~Matsuki, Introduction to the Minimal Model Problem, in {\em Algebraic Geometry}, {\em Sendai 1985}, Advanced Studies of Pure Math. {\textbf{10}}, (1987) Kinokuniya and North-Holland, 283--360. 

%\bibitem[KMM]{kmm-abundance}S.~Keel, K.~Matsuki, J.~M\textsuperscript{c}Kernan, Log abundance theorem for threefolds, Duke Math. J. {\textbf{75}} (1994), no. 1, 99--119. 

\bibitem[Ko93]{kollar-eff-basepoint-free}
J.~Koll\'ar, Effective base point freeness, Math. Ann. {\textbf{296}} (1993), no.~4,
595--605.


%\bibitem[K]{kollar-mmp}J.~Koll\'ar, {\em Singularities of the minimal model program}, Ergebnisse der Mathematik und ihrer Grenzgebiete. 3. Folge. A Series of Modern Surveys in Mathematics, {\textbf{32}}, Springer-Verlag, Berlin. 

%\bibitem{kollar} J.~Koll\'ar, Adjunction and discrepancies, in {\em Flips and abundance for algebraic threefolds}, Ast\'erisque {\textbf{211}} (1992), 183--192.


%\bibitem[K]{kollar}J.~Koll\'ar, {\em Rational curves on algebraic varieties}, Ergebnisse der Mathematik und ihrer Grenzgebiete. 3. Folge. A Series of Modern Surveys in Mathematics, {\textbf{32}}, Springer-Verlag, Berlin. 

%\bibitem[KK]{kollarkovacs}J.~Koll\'ar, S.~Kov\'acs, Log canonical singularities are Du Bois, J. Amer. Math. Soc. {\textbf{23}} (2010), no. 3, 791--813.


\bibitem[KM98]{kollar-mori} J.~Koll\'ar, S.~Mori, {\em{Birational geometry of algebraic varieties}}. With the collaboration of C.~H.~Clemens and A.~Corti. Translated from the 1998 Japanese original. Cambridge Tracts in Mathematics, {\textbf{134}}. Cambridge University Press, Cambridge, 1998.


%\bibitem[KP]{kp} S.~Kov\'acs, Z.~Patakfalvi, Projectivity of the moduli space of stable log-varieties and subadditivity of log-Kodaira dimension, J. Amer. Math. Soc. {\textbf{30}} (2017), no. 4, 959--1021.


%\bibitem[L]{lai}C.~J.~Lai, Varieties fibered by good minimal models, Math. Ann., {\textbf{350}} (2011), no. 3, 533-547. 


%\bibitem[LP]{lazicpeter} V.~Lazi\'c, T.~Peternell,  Abundance for varieties with many differential forms, preprint (2016), arXiv:1601.01602. 

%\bibitem[L1]{leh2}B.~Lehmann, Comparing numerical dimensions, Algebra Number Theory, {\textbf{7}} (2013), no. 5, 1065--1100. 


%\bibitem[Leh]{leh}B.~Lehmann, On Eckl's pseudo-effective reduction map, Trans. Amer. Math. Soc., {\textbf{366}} (2014), no. 3, 1525--1549. 


%\bibitem[Les]{lesieutre} J.~Lesieutre, Notions of numerical Iitaka dimension do not coincide, preprint (2019). 

\bibitem[M23]{masamura}
Y.~Masamura, On boundedness of indices of minimal pairs -- surfaces, preprint (2023).


\bibitem[Na04]{nakayama}N.~Nakayama, {\em Zariski-decomposition and abundance}, MSJ Mem., {\textbf{14}}, Mathematical Society of Japan, Tokyo, 2004. 


%\bibitem[OX]{ox-lcmodel} Y.~Odaka, C.~Xu, Log-canonical models of singular pairs and its applications, Math. Res. Lett., {\textbf{19}} (2012), no. 2, 325--334. 

%\bibitem[S1]{shokurov1}
%V.~V.~Shokurov, 
%Prelimiting flips, 
%Tr. Mat. Inst. Steklova {\textbf{240}} (2003), 
%Biratsion. Geom. Linein. Sist. Konechno Porozhdennye Algebry, 82--219; 
%translation in Proc. Steklov Inst. Math. 2003, no. 1 (240), 75--213. 

%\bibitem[S2]{shokurov}
%V.~V.~Shokurov, 
%$3$-fold log models, 
%Algebraic geometry, 4. J. Math. Sci. {\textbf{81}} (1996), no. 3, 2667--2699.

%\bibitem[S]{shokurovcompl} V.~V.~Shokurov, Complements on surfaces, Algebraic geometry, 10. J. Math. Sci. (New York) {\textbf{102}} (2000), no. 2, 3876--3932. 


%\bibitem[V]{viehweg}  E.~Viehweg, Weak positivity and the additivity of the Kodaira dimension for certain fibre spaces, Adv. Stud. Pure Math. {\textbf{1}} (1983), Algebraic varieties and analytic varieties--Tokyo, 1981 ,329--353.  


%\bibitem[W]{wang} Y.~Wang, On the characterization of abelian varieties for log pairs in zero and positive characteristic, preprint (2016), arXiv:1610.05630v2.
\end{thebibliography}
\end{document}